\numberwithin{equation}{section}
\newtheorem{thm}{Theorem}[section]
\newtheorem{lemma}[thm]{Lemma}
\newtheorem{prop}[thm]{Proposition}
\theoremstyle{definition}
\newtheorem{defn}[thm]{Definition}
\theoremstyle{remark}
\newtheorem{rmk}[thm]{Remark}
\DeclareMathOperator{\Cov}{Cov}
\newcommand\bR{{\mathbb {R}}}
\newcommand\N{{\mathbb {N}}}
\newcommand\bN{{\mathbb {N}}}
\newcommand\bZ{{\mathbb {Z}}}
\newcommand\eps{\varepsilon}
\DeclareMathOperator{\E}{\mathbb{E}}
\DeclareMathOperator{\bE}{\mathbb{E}}
\DeclareMathOperator{\p}{ { \mathbb P} }
\DeclareMathOperator{\bP}{ { \mathbb P} }
\def\tg{{\tilde{g}}}
\def\cA{{\mathcal{A}}}
\title{Rates in almost sure invariance principle
for quickly mixing dynamical systems}
\author{
C.~Cuny{\footnote{Universit\'e de Brest, LMBA, UMR CNRS 6205. Email: christophe.cuny@univ-brest.fr}},
J.~Dedecker{\footnote{Universit\'e Paris Descartes, Sorbonne Paris Cit\'e,  Laboratoire MAP5 (UMR 8145). Email: jerome.dedecker@parisdescartes.fr}},
A.~Korepanov{\footnote{University of Exeter, UK. Email: a.korepanov@exeter.ac.uk}},
Florence Merlev\`ede{\footnote{Universit\'{e} Paris-Est, LAMA (UMR 8050), UPEM, CNRS, UPEC. Email: florence.merlevede@u-pem.fr}}
}
\date{21 November 2018}
\begin{document}

\maketitle

\begin{abstract}
  For a large class of quickly mixing dynamical systems, we prove
  that the error in the almost sure approximation with a Brownian motion
  is of order $O((\log n)^a)$ with $a \geq 2$. 
  Specifically, we consider nonuniformly expanding maps
  with exponential and stretched exponential decay of correlations,
  with one-dimensional H\"older continuous observables. 
\end{abstract}

\noindent{\it Keywords:}  Strong invariance principle, 
KMT approximation, Nonuniformly expanding dynamical systems, Markov chain.

\smallskip

\noindent{\it MSC: } 60F17, 37E05.

\section{Introduction and main result}

Let $X$ be a bounded metric space and $f \colon X \to X$ be a transformation,
preserving a Borel probability measure $\mu$.
Suppose that $\varphi \colon X \to \bR$ is H\"older continuous with
$\int \varphi \, d\mu = 0$. We consider the Birkhoff sums
\begin{equation}
  \label{eq:Sn}
  S_n(\varphi)  = \sum_{k=0}^{n-1} \varphi \circ f^k
\end{equation}
as a discrete time random process, defined on the probability space $(X, \mu)$.
It is common in chaotic dynamical systems that $S_n(\varphi)$ behaves
like a Brownian motion.
For example, $S_n(\varphi)$ may satisfy Donsker's invariance principle:
the normalized process
$X_t = n^{-1/2} S_{\lfloor n t \rfloor} (\varphi)$
may converge weakly to a Brownian motion as $n \to \infty$.
A basic and natural question is, how \emph{close} is $S_n(\varphi)$ to a Brownian motion?
For many chaotic dynamical systems, $S_n(\varphi)$ can be almost surely
approximated by a Brownian motion. In this work we look at the error of such
approximations.

\begin{defn}
We say that a random process $(X_n)_{n \geq 0}$ satisfies the
\emph{almost sure invariance principle} (ASIP) with rate $o(r_n)$,
where $r_n$ is a deterministic sequence such that $r_n = o(n^{1/2} \log \log n)$,
if, possibly enlarging the probability space, there exists
a Brownian motion $W_t$ such that
\[
  X_n = W_n + o(r_n) \quad \text{almost surely.}
\]
\end{defn}

The ASIP was introduced by Strassen~\cite{S64} as a tool
to prove the functional law of iterated logarithm.
Besides that, the ASIP implies  Donsker's invariance principle and
a range of other laws, see for example Philipp and Stout~\cite{PS75}.

A question of particular interest and challenge is to identify the optimal
rate in the ASIP. Strassen conjectured that under the best realistic assumptions
(such as independent increments assuming values $\{-1,+1\}$ with probability $1/2$ each),
the best possible rate is $O(n^{1/4} (\log n)^{1/2} (\log \log n)^{1/4})$. Kiefer~\cite{K69} showed that, for martingales, this is the best rate one can  
obtain by mean of the Skorokhod embedding method. On the other hand,
better rates were proved possible by Cs\"org\H{o} and R\'ev\'esz~\cite{CR75}
and then by Koml\'os, Major and Tusn\'ady~\cite{KMT75} and Major \cite{Ma76}:

\begin{thm}[KMT approximation]
  \label{thm:KMT}
  Suppose that $X_n = \sum_{k=1}^n \xi_k$, where $(\xi_k)$ is a sequence of
  real valued independent and identically distributed random variables
  with $\bE \xi_1 = 0$. Then
  \begin{itemize}
    \item[(a)] if $\bE(|X_1|^p) < \infty$ with $p > 2$, 
      then $(X_n)$ satisfies the ASIP with rate $o(n^{1/p})$;
    \item[(b)] if $\bE(e^{\delta |X_1|}) < \infty$ with $\delta > 0$,
      then $(X_n)$ satisfies the ASIP with rate $O(\log n)$.
  \end{itemize}
\end{thm}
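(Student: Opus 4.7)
My plan is to construct the coupling dyadically, following what is nowadays called the Hungarian scheme. At each dyadic scale $k \geq 0$ I would jointly construct $X_{2^k}$ and a Gaussian $N_k \sim \cN(0, 2^k \sigma^2)$, where $\sigma^2 = \Var(\xi_1)$, on a common probability space. Having placed the endpoints of a dyadic block $[2^{k-1}, 2^k]$, I would then construct the midpoint $X_{3 \cdot 2^{k-2}}$ together with the corresponding Brownian bridge value conditionally on the endpoints, and iterate this refinement down to scale zero. The limit of these interpolations defines the Brownian motion $W$ at all integer times, and the error $|X_n - W_n|$ decomposes as a sum of conditional coupling errors over the scales $k \leq \log_2 n$.

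\textbf{Key quantile coupling.} The core technical estimate is a pointwise comparison between the distribution function $F_n$ of $X_n$ and the centered Gaussian distribution $G_n$ with variance $n \sigma^2$. Setting $V \sim G_n$ and $U = F_n^{-1}(G_n(V))$ realises $U \sim F_n$ on the same probability space as $V$, and one aims at a quantile bound of the form
\[
  |U - V| \leq C_1 \log n + C_2 V^2/n
\]
in case (b), valid outside an event of probability exponentially small in $\log n$; in case (a) the analogous statement is a tail bound compatible with $\bE |U - V|^p \leq C$ uniformly in $n$. Both estimates rest on uniform Edgeworth-type expansions of $F_n$ on the moderate deviation range $|x| \leq c \sqrt{n \log n}$, together with Bernstein-type (resp.\ Marcinkiewicz--Zygmund) tail bounds for $X_n$.

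\textbf{From the coupling lemma to the ASIP.} Applying the quantile coupling inside each dyadic block of size $2^k$ and summing the contributions over $k \leq \log_2 n$ yields the claimed rates. In case (b), a logarithmic error at each scale, combined with a Borel--Cantelli argument discarding the exponentially rare bad events, still sums to $O(\log n)$ after passing to maxima. In case (a), the uniform $p$-th moment bound on $|U - V|$ is fed into a maximal inequality for the telescoping sum, and Borel--Cantelli together with the standard monotonicity device to pass from dyadic times to all $n$ produces the $o(n^{1/p})$ rate.

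\textbf{Main obstacle.} The deepest step is the quantile estimate itself. It demands a sharp, uniform comparison between $F_n$ and $G_n$ over the full moderate deviation range, refining Berry--Esseen by several logarithmic factors. This requires a delicate Fourier/saddle-point analysis of $\hat F_n = (\hat F_1)^n$: one uses the exponential moment in case (b) (respectively the $p$-th moment in case (a)) to control the characteristic function of $\xi_1$ both near the origin (Taylor expansion around $t = 0$) and in the tails, and then inverts the Fourier transform to obtain the required pointwise density bounds. It is precisely the logarithmic (rather than $n^{1/4}$-type) nature of this comparison that distinguishes the Hungarian method from Skorokhod embedding and makes Theorem~\ref{thm:KMT} possible.
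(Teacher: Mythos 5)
The paper does not prove Theorem~\ref{thm:KMT}: it is stated as a citation of Koml\'os--Major--Tusn\'ady~\cite{KMT75} for part~(b) and Major~\cite{Ma76} for part~(a), so there is no ``paper's own proof'' to compare against. Your sketch is a fair high-level description of the classical Hungarian construction that those references use (dyadic refinement of the coupling, quantile transform $U=F_n^{-1}(G_n(V))$ at each block, moderate-deviation/Edgeworth comparison of $F_n$ with $G_n$, and a Borel--Cantelli and maximal-inequality argument to sum the block errors). That said, as a \emph{proof} it stops exactly where the real difficulty begins: the quantile inequality $|U-V|\le C_1\log n + C_2 V^2/n$ with exponentially small exceptional probability is the whole content of KMT's fundamental inequality, and you only assert that it ``rests on uniform Edgeworth-type expansions'' without indicating how the error terms are controlled uniformly over the moderate-deviation range, how the non-lattice/lattice dichotomy is handled, or how the block contributions are made independent so that Borel--Cantelli actually applies. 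Similarly for part~(a) you invoke ``a uniform $p$-th moment bound on $|U-V|$'' but do not explain why finite $p$-th moments of $\xi_1$ propagate to such a bound uniformly in $n$; this is the heart of Major's argument and involves a truncation of $\xi_1$ at level $n^{1/p}$ together with a separate treatment of the truncated and tail parts, none of which appears in your outline. In short, your proposal correctly identifies the strategy and its crux, but it is a roadmap rather than a proof; it would need the actual quantile estimates (and the truncation step for case~(a)) to be a self-contained argument.
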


For processes with dependent increments, it took several decades to obtain better  rates than
$O(n^{1/4} (\log n)^{1/2} (\log \log n)^{1/4})$.
A number of different ways to establish the ASIP were found,
but those with good rates required independence of increments and
proved very hard to extend.

Recently the rate $o(n^\eps)$ for arbitrarily small $\eps > 0$ was reached
by Berkes, Liu and Wu~\cite{BLW14} for processes driven by a Bernoulli shift:
\[
  X_n = \sum_{k=1}^n \psi (\ldots, \xi_{k-1}, \xi_k, \xi_{k+1}, \ldots)
  \, ,
\]
where $(\xi_k)$ are independent and identically distributed (iid) random variables,
and $\psi$ satisfies certain regularity assumptions.
Then, Merlev\`ede and Rio~\cite{MR15} obtained the rate $O(\log n)$
for processes of the type 
\[
  X_n = \sum_{k=1}^n \psi (g_k)
  \, ,
\]
where $\{g_k\}_{k \geq 1}$ is a geometrically ergodic Markov chain
and $\psi$ is bounded.

\medskip

In this paper we suppose that $f$ is a 
\emph{nonuniformly expanding map} in the sense of
Young~\cite{Y99}, and $\mu$ is its unique physical
(Sinai-Ruelle-Bowen) invariant measure.
This covers, for example:
\begin{itemize}
  \item uniformly expanding maps such as the doubling map,
    Gauss continued fraction map, $\beta$-shifts and Gibbs-Markov maps with onto branches;
  \item maps with critical points or indifferent fixed points,
    such as intermittent (Pomeau-Manneville) maps~\cite{LSV99},
    logistic maps with Collet-Eckmann parameters~\cite{BLS03,Y99}
    or Alves-Viana maps~\cite{G06};
  \item factors of nonuniformly hyperbolic maps,
    e.g.\ the collision map for dispersing billiards
    or H\'enon map, which are instrumental
    for proving limit theorems including the ASIP~\cite{MN05}.
\end{itemize}
We state the technical assumptions on the maps to which our results apply
in Subsection~\ref{section:NUE}.

Statistical properties of nonuniformly expanding maps
are often proved using a suitable \emph{inducing scheme.}
One chooses a \emph{base} $Y \subset X$ and a \emph{return time}
$\tau \colon Y \to \bN$ with $f^{\tau(y)}(y) \in Y$ for all $y \in Y$,
so that the \emph{induced map} $f^\tau \colon Y  \to Y$ is particularly nice,
namely full branch Gibbs-Markov.

An inducing scheme comes with a natural ``reference'' probability
measure $m$ on $Y$ (e.g.\ Lebesgue),
and the asymptotics of $m(\tau > n)$ as $n \to \infty$ largely determine
what statistical properties one can prove.
For example, if $f$ is mixing 
(more precisely, $\gcd \{\tau(y) : y \in Y\} = 1$), then the asymptotics of $m(\tau > n)$
give a useful bound on the covariances between the summands in
$S_n(\varphi)$
\cite{G04,KKMe,Y99}:
\begin{itemize}
  \item if $m(\tau \geq n) = O(n^{-\beta})$ with $\beta > 1$,
    then
    \(
      \Cov(\varphi, \varphi \circ f^n)
      = O\bigl(n^{-(\beta-1)}\bigr)
    \);
  \item if $m(\tau \geq n) = O\bigl(\mathrm{e}^{- A n^\gamma}\bigr)$
    with $A > 0$ and $0 < \gamma \leq 1$, then
    \(
      \Cov(\varphi, \varphi \circ f^n)
      = O\bigl(\mathrm{e}^{- B n^\gamma}\bigr)
    \) with some $B > 0$.
\end{itemize}
(Recall that the probability space is $(X, \mu)$.)

Melbourne and Nicol~\cite{MN05} proved the ASIP for $S_n(\varphi)$ 
provided that $\tau \in L^{p}(m)$, $p > 2$. Their rates are of the type
$o(n^{\eps})$, where $\eps \in (1/4,1/2)$ depends on $p$.

\begin{rmk}
  The variance of the Brownian motion in the ASIP for $S_n(\varphi)$ is
  \begin{equation} \label{defc2}
    c^2 = \lim_{n \to \infty } \frac{1}{n} \int |S_n (\varphi )|^2 \, d\mu
    \, .
  \end{equation}
  For nonuniformly expanding maps with $\tau \in L^2(m)$,
  the limit exists by e.g.\ \cite[Cor.~5.5]{KKM}.
\end{rmk}

Korepanov~\cite{K17r} applied the result of~\cite{BLW14} to nonuniformly expanding
dynamical systems, showing the ASIP with rate $o(n^\eps)$ for every $\eps > 0$
under the assumption of exponential tails of the return times,
i.e.\ $m(\tau > n) = O(\mathrm{e}^{-c n})$ with $c > 0$.

The method of~\cite{K17r} only works for exponential decay of return times.
It was improved by the authors of this paper in~\cite{CDKM18},
where we obtained a significantly more general result
which covers maps with polynomial decay of return times, such as
intermittent maps:

\begin{thm}[\cite{CDKM18}]
  \label{thm:NUE:pol}
  Let $f \colon X \to X$ be a nonuniformly expanding map
  (see Section~\ref{section:NUE})
  with the reference measure $m$, return time $\tau$ 
  and physical invariant measure $\mu$.
  Let $\varphi \colon X \to \bR$ be a H\"older continuous
  function with $\int \varphi \, d\mu = 0$.
  Consider the random process
  $S_n(\varphi)  = \sum_{k=0}^{n-1} \varphi \circ f^k$
  on the probability space $(X,\mu)$. 
  \begin{itemize}
    \item[(a)] If $m(\tau > n) = O(n^{-\beta})$ with $\beta > 2$, then
      the process $(S_n(\varphi))_{n \geq 0}$ satisfies the ASIP with
      the rate $o(n^{1/\beta} (\log n)^{1/\beta + \eps})$
      for every $\eps > 0$.
    \item[(b]) If $\int \tau^\beta \, dm < \infty$ with $\beta > 2$, then
      the process $(S_n(\varphi))_{n \geq 0}$ satisfies the ASIP with
      the rate $o(n^{1/\beta})$.
  \end{itemize}
\end{thm}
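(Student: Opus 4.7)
The plan is to reduce from the nonuniformly expanding map $f$ to its Gibbs-Markov induced map $f^\tau \colon Y \to Y$, establish an ASIP for an induced process on $Y$ by exploiting the fast decay of the transfer operator on H\"older spaces, and then transfer the conclusion back to $X$ through a renewal-type time change.

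First, I would define the induced observable $\Phi \colon Y \to \bR$ by $\Phi(y) = \sum_{k=0}^{\tau(y)-1} \varphi(f^k y)$, which satisfies $\int \Phi \, d\mu_Y = 0$ (with $\mu_Y$ the normalized restriction of $\mu$ to $Y$) and $|\Phi| \leq \|\varphi\|_\infty \tau$. Under assumption (b) this gives $\Phi \in L^\beta(\mu_Y)$; under (a) it gives $\mu_Y(|\Phi| > t) = O(t^{-\beta})$. Next, consider $T_N = \sum_{k=0}^{N-1} \Phi \circ (f^\tau)^k$. Using a Gordin-type martingale-coboundary decomposition $\Phi = \psi + \chi - \chi \circ f^\tau$, based on the exponential contraction of the transfer operator of $f^\tau$ on a suitable H\"older / Lipschitz space, one extracts a stationary reverse-martingale difference $\psi$ that inherits the $L^\beta$ (or weak-$L^\beta$) control. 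Applying KMT-type ASIP results for martingale differences then yields an ASIP for $T_N$ with rate $o(N^{1/\beta})$, respectively $o(N^{1/\beta}(\log N)^{1/\beta + \eps})$.

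Second, I would perform the time change. Let $\sigma_k = \sum_{j=0}^{k-1}\tau \circ (f^\tau)^j$ and $N(n) = \max\{k : \sigma_k \leq n\}$. Write
\[
  S_n(\varphi) = T_{N(n)} + R_n
  \, ,
\]
where $R_n$ captures the partial excursion from the last return up to time $n$. One must (i) replace the random index $N(n)$ by the deterministic $\lfloor n / \bE_Y(\tau) \rfloor$ using a strong law for $\sigma_k$ together with the modulus of continuity of the approximating Brownian motion; and (ii) control $R_n$ by a Borel--Cantelli argument on $\max_{k \leq n/\bE_Y(\tau)} \tau \circ (f^\tau)^k$, which, given the polynomial tail of $\tau$, is $o(n^{1/\beta}(\log n)^{1/\beta + \eps})$ in case (a) and $o(n^{1/\beta})$ in case (b).

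The hardest part is executing the time change without losing the polynomial rate: the CLT-scale fluctuations of $N(n)$ about $n/\bE_Y(\tau)$ are of order $\sqrt{n}$, dwarfing the target error $n^{1/\beta}$. The key observation is that those fluctuations are absorbed into the Brownian motion through its continuity modulus at a random time close to $n/\bE_Y(\tau)$, so that only genuinely $n^{1/\beta}$-scale errors from the excursion tails and the martingale approximation survive. Balancing these three sources of error --- martingale approximation, renewal time change, and the tail-excursion remainder $R_n$ --- to match the stated rates is the technical core of the argument, and the logarithmic correction in (a) is essentially forced by the Borel--Cantelli control available under only a polynomial tail assumption on $\tau$.
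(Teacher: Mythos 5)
Your proposal follows the classical ``induce, prove an ASIP for the induced system, transfer via a time change'' route, whereas the cited paper~\cite{CDKM18} (and the present paper) proceed quite differently: they represent $\{\varphi\circ f^n\}_{n\ge 0}$ in distribution as $\{\psi(g_n,g_{n+1},\dots)\}_{n\ge 0}$ for a Markov chain $(g_n)$ running in unit time steps on the Young \emph{tower} (see Section~\ref{sec:MC}), then carry out an $m$-dependent approximation, a Bernstein-type blocking, and an application of Sakhanenko's ASIP for sums of independent, non-identically distributed random variables. In that framework there is no random time change at all, and the rate is obtained by tuning the block lengths to the tail of $\tau$.

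The sticking point in your approach is precisely the step you flagged as hardest, and as stated it does not go through. After extracting an ASIP $T_N = W(N\sigma^2) + o(N^{1/\beta})$ for the induced sums, you need to convert $T_{N(n)}$ into a Brownian motion evaluated at the \emph{deterministic} time $nc^2$. But $N(n)\sigma^2 - nc^2$ has almost-sure fluctuations of order $\sqrt{n\log\log n}$ (law of the iterated logarithm for $\sigma_k$), and the L\'evy modulus of continuity of $W$ over an interval of length $\Delta_n\asymp\sqrt{n\log\log n}$ is of order $\sqrt{\Delta_n\log n}\asymp n^{1/4}(\log n)^{1/2}(\log\log n)^{1/4}$. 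This dominates $n^{1/\beta}$ whenever $\beta>4$, so ``absorbing the fluctuations into the modulus of continuity'' destroys the target rate; this is exactly why the naive time-change argument gave Melbourne--Nicol rates stuck around $n^{1/4}$. To salvage a time-change argument one must prove a \emph{joint} ASIP for the pair $(T_N,\sigma_N-N\bE_Y\tau)$ and exploit the coupling between the two approximating Brownian motions to cancel the $\sqrt{n}$-scale discrepancy, rather than bounding the increment by a worst-case modulus; this is a substantially different and more delicate argument than the one you sketch. The published proof avoids the issue entirely by working on the tower in unit time.

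One smaller point: your martingale step would also need care. KMT-type rates for stationary martingale differences are not automatic; even for iid data the rate $o(n^{1/\beta})$ under $L^\beta$ moments is a nontrivial result of Koml\'os--Major--Tusn\'ady/Major, and for genuine (non-iid) martingale differences the Skorokhod embedding caps the rate at $n^{1/4}(\log n)^{1/2}(\log\log n)^{1/4}$ (Kiefer), as recalled in the introduction. A Gordin decomposition alone does not produce the stated polynomial rates; one again needs a blocking-plus-Sakhanenko (or Berkes--Liu--Wu) scheme on top, which is what the paper does directly on the tower.
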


\begin{rmk}
  The rate in Theorem~\ref{thm:NUE:pol} (a) is essentially optimal:
  $\eps$ cannot be reduced to $0$, for example, for the intermittent maps.
\end{rmk}

When the return times decay faster than polynomially, Theorem~\ref{thm:NUE:pol}
gives the ASIP with  rate $o(n^{\eps})$ for any $\eps > 0$, the same as in~\cite{K17r}.
This, however, is suboptimal in case of exponential and stretched exponential tails.
We fix this in the present paper. Our main result is:

\begin{thm} \label{thm:NUE}
  Let $f \colon X \to X$ be a nonuniformly expanding map
  (see Section~\ref{section:NUE})
  with the reference measure $m$, return time $\tau$ 
  and physical invariant measure $\mu$.
  Let $\varphi \colon X \to \bR$ be a H\"older continuous
  function with $\int \varphi \, d\mu = 0$.
  Consider the random process
  $S_n(\varphi)  = \sum_{k=0}^{n-1} \varphi \circ f^k$
  on the probability space $(X,\mu)$. 
  
  If $m(\tau > n) = O(\mathrm{e}^{-\kappa n^\gamma})$
  with $\kappa > 0$ and $\gamma \in ]0,1]$,
  then the process $(S_n(\varphi))_{n \geq 0}$ satisfies the ASIP with
  the rate $O( (\log n )^{1 + 1/\gamma})$.
\end{thm}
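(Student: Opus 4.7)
The strategy parallels \cite{CDKM18} for the polynomial case, adapted to exploit the faster decay. The first step is to pass to the induced Gibbs--Markov map $T = f^\tau \colon Y \to Y$ and the induced observable $\Phi(y) = \sum_{k=0}^{\tau(y)-1} \varphi(f^k y)$. Since $\varphi$ is bounded, $|\Phi| \leq \|\varphi\|_\infty\,\tau$, so the stretched-exponential tails of $\tau$ transfer to $\Phi$: there exists $\kappa' > 0$ with $\mu_Y(|\Phi| > t) = O(\mathrm{e}^{-\kappa' t^\gamma})$. On each element of the return partition, $\Phi$ is H\"older with uniform constants, and $\int \Phi \, d\mu_Y = 0$.

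The heart of the argument is to prove that the induced Birkhoff sums $\widetilde S_n := \sum_{k=0}^{n-1} \Phi \circ T^k$ satisfy the ASIP with rate $O((\log n)^{1+1/\gamma})$ under $\mu_Y$. I would combine the Markov-chain ASIP machinery of Merlev\`ede--Rio \cite{MR15} with a truncation argument. Fix $M_n = A(\log n)^{1/\gamma}$ with $A$ large, and decompose $\Phi = \Phi^{(M_n)} + R$ into its truncation at level $M_n$ and its tail. The tail contribution $\sum_{k<n} R \circ T^k$ can be absorbed into the error by Borel--Cantelli, using $\mu_Y(|\Phi| > M_n) \leq n^{-A^\gamma \kappa'}$ and summability of $\sum_n n\,\mu_Y(|\Phi| > M_n)$ for $A$ large. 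For the truncated observable, the Gibbs--Markov structure provides a geometrically ergodic symbolic Markov representation; a martingale--coboundary decomposition then produces martingale differences whose $L^\infty$ norm is $O(M_n)$, and a Sakhanenko-type quantitative ASIP for bounded martingale differences yields a Brownian motion $\widetilde W$ with $\widetilde S_n - \widetilde W_n = O(M_n \log n) = O((\log n)^{1+1/\gamma})$ almost surely. Equivalently, one can follow \cite{MR15} more literally with blocks of length $L_n \asymp \log n$, a Bernstein-type concentration inequality for block sums (whose individual summands are bounded by $M_n$), and KMT on the near-independent blocks.

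The final step transfers the ASIP back to $(X,\mu)$. Set $\tau_k = \sum_{j<k} \tau \circ T^j$ and $N(n) = \max\{k : \tau_k \leq n\}$, so that
\[
  S_n(\varphi) = \widetilde S_{N(n)} + R_n,
\]
where $|R_n| \leq \|\varphi\|_\infty \max_{k \leq N(n)+1} \tau \circ T^k = O((\log n)^{1/\gamma})$ almost surely by Borel--Cantelli. Applying the intermediate ASIP to $\tau - \bar\tau$ (with $\bar\tau = \int \tau \, d\mu_Y$) gives $N(n) = n/\bar\tau + O((\log n)^{1+1/\gamma})$ in the same strong sense. Composing with a deterministic time change and a path-continuity estimate for the Brownian motion produced for $\widetilde S_n$ then yields the ASIP for $S_n(\varphi)$ with the claimed rate; the variance of the limiting Brownian motion is matched to $c^2$ from \eqref{defc2} via the standard identity $c^2 = \bar\tau^{-1} \lim_n n^{-1} \int \widetilde S_n^2 \, d\mu_Y$.

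The main obstacle is the intermediate ASIP with the sharp rate. The delicate coordination is between the truncation level $M_n \asymp (\log n)^{1/\gamma}$ and the block length (or martingale truncation), so that truncation error, mixing error, and the Gaussian-approximation error all fit inside $O((\log n)^{1+1/\gamma})$. The nontrivial analytic input is a Bernstein-type concentration inequality for the truncated induced observable (or, in the martingale viewpoint, uniform control on the coboundary correction), which requires combining standard Gibbs--Markov distortion estimates with a careful treatment of the piecewise H\"older structure of $\Phi$ across return branches.
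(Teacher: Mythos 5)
Your proposal takes the classical ``inducing'' route: prove the ASIP for the induced observable $\Phi$ on $Y$ and then transfer back to $(X,\mu)$ through the random lap number $N(n)=\max\{k:\tau_k\le n\}$. This is a genuinely different strategy from the paper's, which never induces: in Section~\ref{sec:prelim} the process $S_n(\varphi)$ is redefined (with the \emph{same} time parameter $n$, no time change) as an additive functional of a stationary Markov shift, and the Berkes--Liu--Wu blocking scheme together with Sakhanenko's theorem for independent, non-identically distributed summands is run directly on that representation. The paper avoids inducing precisely because of the obstacle that your transfer step runs into.

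The gap is the claim that the intermediate ASIP for $\tau-\bar\tau$ gives $N(n)=n/\bar\tau+O((\log n)^{1+1/\gamma})$. It does not. An ASIP bounds the error of approximation by a Brownian motion, not the size of the process: the ASIP says $\tau_m - m\bar\tau = \tW(c^2 m) + O((\log m)^{1+1/\gamma})$, and by the law of the iterated logarithm $\tW(c^2 m)=O(\sqrt{m\log\log m})$; hence $\tau_m - m\bar\tau$, and consequently $N(n)-n/\bar\tau$, has a.s.\ fluctuations of order $\sqrt{n\log\log n}$. Feeding a time perturbation of that size into the Brownian motion constructed for $\tS_m$ yields $\tW_{N(n)}-\tW_{n/\bar\tau} = O(n^{1/4+\eps})$ by the modulus of continuity, which swamps the target rate $O((\log n)^{1+1/\gamma})$. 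This is exactly the barrier that caps the Melbourne--Nicol inducing approach at rates $o(n^{\eps})$ with $\eps>1/4$; it is not a deterministic time change and cannot be absorbed by a path-continuity estimate. A secondary concern: Sakhanenko's theorem (as used in the paper, Step 3) applies to \emph{independent} block sums produced by the conditioning, not to bounded martingale differences, for which the available quantitative ASIPs (Skorokhod-type) are limited to $n^{1/4}$-type rates, cf.\ Kiefer~\cite{K69}; so the intermediate ASIP on $Y$ at rate $O((\log n)^{1+1/\gamma})$ is not obtained by the martingale--coboundary route you sketch either.
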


\begin{rmk}
  For the doubling map, Theorem~\ref{thm:NUE} gives the rate
  $O( (\log n)^2 )$, improving the rate $o(n^\eps)$
  for every $\eps > 0$ in~\cite{BLW14}.
  We conjecture that it can be further improved to $O( \log n )$,
  which is known to be optimal for
  processes with independent and identically  distributed increments~\cite{KMT75}
  and additive bounded functionals of 
  geometrically ergodic Markov chains~\cite{MR15}.
  We expect the same for all maps which are nonuniformly expanding
  with exponential decay of return times: our rate $O( (\log n)^2 )$
  should be eventually reduced to $O( \log n )$.
\end{rmk}

\begin{rmk}
  In most examples of nonuniformly expanding maps,
  the return times decay exponentially, except the intermittent maps with
  polynomial decay and Alves-Viana maps where the best available
  estimates are stretched exponential, namely $\mathrm{e}^{-c \sqrt{n}}$,
  see~\cite{BG03,G06}.
  To enhance our portfolio of examples, in Appendix~\ref{sec:example}
  we present a family
  of interval maps with decay $O(\mathrm{e}^{-c n^\gamma})$,
  parametrized by $\gamma \in ]0,1]$.
\end{rmk}

\begin{rmk}
  In a class of nonuniformly hyperbolic dynamical systems, 
  the ASIP can be deduced from the corresponding result on
  a nonuniformly expanding map, with the same rate, using the
  so-called Sinai trick~\cite{MN05}.
  For example, for dispersing billiards and H\'enon maps,
  where the return times have exponential tails,
  we obtain the rate $O( (\log n)^2 )$. 
\end{rmk}

The paper is organized as follows. In Section~\ref{sec:prelim}, we give
a formal definition of the class of nonuniformly expanding maps
to which our results apply. Further, we redefine the random process
$S_n(\varphi)$ on a Markov shift without changing its distribution.
In Section~\ref{sectionproofNUE}, we prove Theorem~\ref{thm:NUE}.
The proof is based on the construction of approximating Brownian motion
from~\cite{BLW14} as in~\cite{CDM17} and~\cite{CDKM18}.
One of the crucial tools is the ASIP for processes
with independent (but not necessarily identically distributed)
increments by Sakhanenko~\cite[Thm. 1]{Sa84}.  Sakhanenko's result
generalizes  KMT's  result (concerning iid random variables having a finite moment generating function in a neighborhood of $0$) to 
non-identically distributed random variables whose distributions satisfy a  condition equivalent to the condition in Bernstein's well-known inequality 
(as shown by  Zaitsev \cite{Zait84}).

\medskip

Throughout the paper, we shall often use the notation $a_{n} \ll b_{n}$ which means
that there exists a universal constant $C$ such that, for all $n \geq1$,
$a_{n} \leq C b_{n}$.

\section{Reduction to a Markov shift}
\label{sec:prelim}

\subsection{Nonuniformly expanding maps}
\label{section:NUE}

Here we state formal assumptions on dynamical systems, to which
our results apply. Briefly, we require that they admit a Young tower,
i.e.\ an inducing scheme with a full branch Gibbs-Markov base map
and certain regularity assumptions.
Often Young towers are difficult to construct,
but they provide a universal framework for proving limit theorems. 
For uniformly expanding maps, including those that are not Markov or conformal, one could verify the general assumptions of Eslami~\cite[Sec.~7.2]{P18}.

Let $X$ be a complete bounded separable metric space with the Borel $\sigma$-algebra.
Suppose that $f \colon X \to X$ is a measurable transformation which
admits an inducing scheme consisting of:
\begin{itemize}
  \item a closed subset $Y$ of $X$ with a \emph{reference} probability measure $m$ on $Y$;
  \item a finite or countable partition $\alpha$ of $Y$ (up to a zero measure set)
    with $m(a) > 0$ for all $a \in \alpha$;
  \item an integrable \emph{return time} function $\tau \colon Y \to \{1,2,\ldots\}$ which is constant
    on each $a \in \alpha$ with value $\tau(a)$ and
    $f^{\tau(a)}(y) \in Y$ for all $y \in a$, $a \in \alpha$.
    (We do not require that $\tau$ is the first return time to $Y$.)
\end{itemize}

Let $F \colon Y \to Y$, $F(y) = f^{\tau(y)}(y)$ be the induced map.
We assume that there are constants $\kappa > 1$, $K > 0$ and $\eta \in (0,1]$
such that for each $a \in \alpha$ and all $x,y \in a$:
\begin{itemize}
  \item $F$ restricts to a (measure-theoretic) bijection from $a$ to $Y$;
  \item $d(F(x), F(y)) \geq \kappa d(x,y)$;
  \item $d(f^k(x), f^k(y)) \leq K d(F(x), F(y))$
    for all $0 \leq k \leq \tau(a)$;
  \item the inverse Jacobian $\zeta_a = \frac{dm}{dm \circ F}$ of
    the restriction $F \colon a \to Y$ satisfies
    \[
      \bigl| \log |\zeta_a(x)| - \log |\zeta_a(y)| \bigr|
      \leq K d(F(x), F(y))^\eta
      .
    \]
\end{itemize}

In addition to the standard assumptions above, we rely on
non-pathological coding of orbits under $F$ allow by the elements
of $\alpha$.
Let $\cA$ be the set of all finite words in the alphabet $\alpha$
and $Y_w= \cap_{k=0}^n F^{-k}(a_k)$ for $w=a_0\cdots a_n \in \cA$.
We require that
\[
  m(Y_w)=m(\bar Y_w)
  \qquad \text{for every } w \in \cA
  .
\]

We say that the map $f$ as above is nonuniformly expanding.
We refer to $F \colon Y \to Y$, $F(x) = f^{\tau(x)}(x)$ as the induced map.
It is standard \cite[Cor. p.~199]{AD}, \cite[Proof of Thm.~1]{Y99} that there is
a unique absolutely continuous $F$-invariant probability measure
$\mu_Y$ on $Y$ with $\frac1c \le d\mu_Y / dm\le c$ for some $ c>0$,
and the corresponding $f$-invariant probability measure $\mu$ on $X$.
% by the Corollary page 109 of \cite{AD}.

%An important property of nonuniformly expanding maps is the asymptotics of return
%times.
We say that the return times of $f$ have:
\begin{itemize}
  \item a weak polynomial moment of order $\beta \geq 1$, if $m(\tau  \geq n) \ll n^{-\beta}$;
  \item a strong polynomial moment of order $\beta \geq 1$, if $\int \tau^\beta \, dm < \infty$;
  \item a subexponential moment of order $\gamma \in ]0,1]$, if $\int {\rm e}^{\delta \tau^{\gamma}} d m < \infty$ for some $\delta >0$.
\end{itemize}

\subsection{Markov shift}
\label{sec:MC}

Following~\cite{CDKM18,K17r}, for nonuniformly expanding dynamical systems,
the random process $S_n(\varphi)$ can be redefined on a Markov shift
without changing its distribution. The structure of the Markov shift is as follows.   

Let \((\cA, \bP_\cA)\) be a countable probability space and
\(h \colon \cA \to \{1,2,\ldots\}\) with $\bE_\cA(h) < \infty$.
Let 
\[
  S = \{(w, \ell) \in \cA \times \bZ : 0 \leq \ell < h(w)\}
  \, .
\]

We construct a stationary Markov chain $g_0, g_1, \ldots$ on $S$ such that if
$g_n = (w, \ell)$ with $\ell < h(w) - 1$, then $g_{n+1} = (w, \ell + 1)$,
while if $g_n = (w, \ell)$ with $\ell = h(w) - 1$, then $g_{n+1} = (w', 0)$,
with $w' \sim \bP_\cA$ independent from $(g_k)_{k \leq n}$.

The stationary measure of our Markov chain we denote by $\nu$.
For $(w, \ell) \in S$,
\begin{equation}
  \label{defnu}
  \nu (w, \ell ) 
  =  \frac{\bP_\cA(\omega)}{\bE_\cA (h)}
  \, .
\end{equation}

It is convenient to represent $(g_n)_{n \geq 0}$ as generated by a sequence of
independent \emph{innovations,} as follows. 
Let $g_0 \sim \nu$ and let \(\eps_1, \eps_2, \ldots\) be a sequence of 
independent (also from $g_0$) and identically distributed random variables
with values in \(\cA\) and distribution \(\bP_\cA\).
Let
\begin{equation}
  \label{defgnviaU}
  g_{n+1} 
  = U(g_n, \eps_{n+1}) \, ,
\end{equation}
where
\begin{equation}
  \label{defUforMC} 
  U((w, \ell), \eps)
  = \begin{cases}
    (w, \ell+1), & \ell < h(w) - 1 \, ,  \\
    (\eps, 0), & \ell = h(w) - 1 \, .
  \end{cases}
\end{equation}

Let $\Omega \subset S^\bN$ be the space of possible trajectories of $(g_n)$
(i.e.\ sequences which correspond to non-zero probability transitions),
and let $\bP_\Omega$ be the corresponding probability measure.

Let $ \lambda >1$. For
$a = (g_0, \ldots, g_n, g_{n+1},  \ldots)$ and
$b = (g_0, \ldots, g_n, g'_{n+1}, \ldots)$
with \(g_{n+1} \neq g'_{n+1}\),
let
\begin{equation} \label{distanceseparation}
  d(a,b) = \lambda^{-\#\{ 1 \leq k \leq n : g_k \in S_0 \}}
  \, ,
\end{equation}
where $S_0 = \{(w, \ell) \in S : \ell = 0\}$.
Then $d$ is a separation metric on $\Omega$, the separation time
counted in terms of returns to $S_0$.

We proved~\cite{CDKM18,K17r} that given a nonuniformly expanding dynamical system
$f \colon X \to X$ with a H\"older continuous observable $\varphi \colon X \to \bR$,
there exists a Markov chain as above
and a H\"older continuous
function $\psi \colon \Omega \to \bR$, such that
\[
  \{\varphi \circ f^n\}_{n \geq 0}
  \stackrel{d}{=}
  \{\psi(g_n, g_{n+1}, \ldots)\}_{n\geq 0}
  \, .
\]
(The equality is in law, and the probability measures are $\mu$ and $\bP_\Omega$
respectively.)
Moreover, $h$ has essentially the same tails as $\tau$:
\begin{itemize}
  \item (weak polynomial moment) if $m(\tau  \geq n) \ll n^{-\beta}$
    with $\beta \geq 1$, then $\bP_\cA(h \geq n) \ll n^{-\beta}$;
  \item (strong polynomial moment) if $\int \tau^\beta \, dm < \infty$
     with $\beta \geq 1$, then $\int h^\beta \, d\bP_\cA < \infty$;
  \item (subexponential moment) if $\int {\rm e}^{\delta \tau^{\gamma}} d m < \infty$
    with $\gamma \in ]0,1]$ and $\delta >0$, then
    $\int {\rm e}^{\delta' h^{\gamma}} d \bP_\cA < \infty$
    with some $\delta' > 0$.
\end{itemize} 
Denote 
\begin{equation}
  \label{defXnwithgn}
  X_n = \psi(g_n, g_{n+1}, \ldots)
  \qquad \text{and} \qquad 
  S_n = \sum_{k=1}^{n} X_k
  \, .
\end{equation}
Thus, the ASIP for $S_n(\varphi)$ is reduced to the ASIP for $S_n $.

\subsection{Meeting time}

Following~\cite[Appendix~A]{CDKM18}, for the purpose of proving the ASIP,
we assume without loss of generality that
$\gcd \{h(w) \colon w \in \cA\} = 1$, that is the Markov chain $(g_n)$ is aperiodic.

Let \(g_0^*\) be a random variable in \(S\) with distribution $\nu$,
independent from \(g_0\) and \((\eps_n)_{n \geq 1}\).
Let \(g_0^*, g_1^*, g_2^*, \ldots\) be a Markov chain given by
\[
  g_{n+1}^* = U(g_n^*, \eps_{n+1})
  \ \text{ for } \ n \geq 0 \, .
\]
Thus the chains \( (g_n)_{n \geq 0}\) and \(( g_n^*)_{n \geq 0}\)
have independent initial states, but share the same innovations.
Define the meeting time:
\begin{equation}
  \label{defofTtilde}
  T = \inf \{n \geq 0 : g_n = g^*_n \} \, .
\end{equation}

Recall that $\bP_\cA(h \geq n) = O(\mathrm{e}^{-c n^\gamma})$. This translates into
a similar bound for $T$:

\begin{lemma} \label{lem:nngh}
  There exists $\delta > 0$ such that 
  $ \bP(T \geq n) = O(\mathrm{e}^{-\delta n^\gamma})$.
\end{lemma}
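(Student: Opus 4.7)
First, I would characterize $T$ as the first simultaneous return of the two chains to the base $S_0$ and couple the problem to the first coincidence of two independent renewal processes. Observe that the two chains merge at a time $n$ if and only if at time $n-1$ both sit at the top of their respective towers: by \eqref{defUforMC}, in that case both consume the common innovation $\eps_n$ and land at $(\eps_n, 0)$. Letting $T_1 < T_2 < \cdots$ and $T_1^* < T_2^* < \cdots$ be the successive return times of $(g_n)$ and $(g_n^*)$ to $S_0$, one has
\[
  T \;=\; \min\bigl(\{T_k\}_{k \ge 1} \cap \{T_j^*\}_{j \ge 1}\bigr)\, .
\]
The key structural point is that before the first coincidence, the indices at which $g$ and $g^*$ consume innovations---namely $(T_k)$ and $(T_j^*)$---must be disjoint (a shared index would force an immediate meeting). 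Therefore $(T_k)$ and $(T_j^*)$, stopped at the first coincidence, can be coupled with two \emph{independent} renewal processes whose iid inter-arrival times have the law of $h$ under $\bP_\cA$.

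The initial delays $T_1$ and $T_1^*$ have subexponential tails under $\nu$: by \eqref{defnu}, the height $h(w_0)$ of the tower containing $g_0$ follows the size-biased law $\bP_\cA(w)h(w)/\bE_\cA(h)$, so
\[
  \bP(T_1 \ge n) \;\le\; \bP_\nu(h(w_0) \ge n) \;=\; \frac{\bE_\cA[h\,\mathbf{1}_{\{h \ge n\}}]}{\bE_\cA(h)} \;\ll\; \mathrm{e}^{-\delta_1 n^\gamma}
\]
under the hypothesis $\int \mathrm{e}^{\delta h^\gamma}\,d\bP_\cA < \infty$, and likewise for $T_1^*$. Thus, apart from an event of probability $O(\mathrm{e}^{-\delta_1 n^\gamma})$, both chains have already returned to $S_0$ well before time $n$.

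The main step is then to bound the first coincidence time of two independent aperiodic renewal processes with iid subexponential gaps. This combines (i) a large-deviation bound via Bernstein-type inequalities for iid sums with subexponential moments, which rules out that any single gap among the first $\asymp n$ draws exceeds a threshold $M$, with error at most $n\mathrm{e}^{-\delta M^\gamma}$; (ii) Blackwell's renewal theorem together with a Markovian coupling of the pair of (restricted, finite-state) processes, yielding a geometric coupling rate after the truncation. The main obstacle is calibrating $M$ and the coupling rate against one another to produce the target rate $\mathrm{e}^{-\delta n^\gamma}$; this is a quantitative form of the classical statement that two independent aperiodic renewal sequences with subexponential iid gaps collide with a subexponential tail. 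Combining the resulting estimate with the initial-delay bound yields $\bP(T \ge n) \ll \mathrm{e}^{-\delta n^\gamma}$, as announced.
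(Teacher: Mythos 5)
Your structural reduction is exactly the one the paper relies on: the meeting time $T$ of the two chains sharing innovations equals (for $T\ge 1$) the first simultaneous return of the two chains to $S_0$, and before that first coincidence the two return processes read disjoint subsets of the iid innovations, so their inter-arrival sequences behave as two independent renewal streams with gap law $h_*\bP_\cA$. Your bound on the initial delays via the size-biased law under $\nu$ is also correct. So far this matches the paper.

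The issue is in the last step. The paper does not try to reprove the renewal-coupling estimate; it invokes Lindvall's theorem (and Rio, Prop.~9.6) for exactly the statement you isolate as ``the main obstacle,'' namely that two independent aperiodic renewal processes whose gaps satisfy $\int \mathrm{e}^{\delta x^\gamma}\,dF(x)<\infty$ have a first coincidence time with the same type of subexponential tail. You correctly identify this as the crux, but the sketch you offer---truncate gaps at $M$, control the truncation error by $n\,\mathrm{e}^{-\delta M^\gamma}$, and use Blackwell plus finite-state coupling for the truncated process---does not close without a genuinely non-trivial argument. After truncation at level $M$, the residual-life chain has state space of size $M$, and the geometric coupling rate of the product chain degrades as $M$ grows (a priori one only gets a window of length $\Theta(M)$ in which a simultaneous renewal has probability bounded below, i.e. a tail like $\mathrm{e}^{-cn/M}$); balancing that against the truncation error $n\,\mathrm{e}^{-\delta M^\gamma}$ forces $M\gtrsim n$, which then kills the geometric factor. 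So the ``calibration'' you flag is not a routine bookkeeping step---it is the content of Lindvall's theorem, and the naive truncate-and-couple scheme does not obviously deliver $\mathrm{e}^{-\delta n^\gamma}$. To complete the proof you should either cite the Lindvall/Rio result directly (as the paper does), or replace the truncation argument by a quantitative version of Blackwell's theorem, e.g.\ a bound of the form $|u_n-1/\mu|\lesssim \bP(\tau>cn)$ for the renewal sequence under subexponential moments, from which the coincidence-time tail follows.

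Two minor points worth fixing in your write-up: the identity $T=\min(\{T_k\}\cap\{T_j^*\})$ should be stated as an inequality $T\le\min(\{T_k\}\cap\{T_j^*\})$ (it can fail when $g_0=g_0^*\notin S_0$, which is harmless for the tail bound but should be noted); and the passage from ``disjoint reading indices'' to ``can be coupled with two independent renewal processes'' deserves a sentence of justification, since the reading times are themselves random---the clean statement is that the product chain $(g_n,g_n^*)$ restricted to the complement of the diagonal has the same transition kernel as the product of two independent copies, so that the hitting time of the diagonal equals in law the first simultaneous renewal of two independent chains.
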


The proof is omitted since it uses the same argument as in \cite[Lemma 3.1]{CDKM18},
namely the result of Lindvall~\cite{Li79} (see also \cite[Prop.~9.6]{Rio17}).

It is noteworthy to indicate that Lemma \ref{lem:nngh} implies the following control on the covariances:

\begin{lemma}
  \label{lmacovtildeXbis}
  There exists $\delta > 0$ such that
  \(
    |\Cov (X_0, X_n) | 
    =O(\mathrm{e}^{-\delta n^\gamma})
  \).
\end{lemma}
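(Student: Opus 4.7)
The plan is to reduce $\Cov(X_0, X_n)$ to the $\beta$-mixing of the Markov chain $(g_k)$, which is controlled by the meeting time via Lemma~\ref{lem:nngh}, combined with a Hölder truncation of $\psi$. The truncation step is needed because $X_0 = \psi(g_0, g_1, \ldots)$ depends on the entire trajectory and is not a function of finitely many coordinates.

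First I would truncate $X_0$. For any $L$, fix a measurable extension mapping a finite trajectory $(g_0, \ldots, g_L)$ to a full trajectory in $\Omega$ (for instance, always taking the deterministic next step $(w, \ell+1)$ when $\ell < h(w) - 1$ and restarting at $(w^*, 0)$ for a fixed $w^* \in \cA$ otherwise), and set $X_0^{(L)}$ to be $\psi$ evaluated at this extension; then $X_0^{(L)}$ is $\sigma(g_0, \ldots, g_L)$-measurable and bounded by $\|\psi\|_\infty$. (That $\psi$ is bounded follows from its Hölder continuity with respect to the diameter-one separation metric together with boundedness at one point, which $\psi$ inherits from $\varphi$ being Hölder on the bounded set $X$.) Since the original and truncated sequences agree on their first $L+1$ coordinates, definition~\eqref{distanceseparation} and Hölder continuity of $\psi$ give
\[
  |X_0 - X_0^{(L)}| \;\leq\; C \, \lambda^{-\eta R_L},
  \qquad
  R_L := \#\{1 \leq k \leq L : g_k \in S_0\},
\]
for constants $C, \eta > 0$. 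I would bound $\bE[\lambda^{-\eta R_L}]$ by splitting on $\{R_L \geq \rho L\}$ versus $\{R_L < \rho L\}$ with fixed $\rho \in (0, \nu(S_0))$: the first event contributes at most $\lambda^{-\eta \rho L}$. The complementary event coincides with $\{T_{\lceil \rho L \rceil} > L\}$ for the renewal times $T_m = h_1 + \cdots + h_m$ of visits to $S_0$, where the $h_i$ are essentially i.i.d.\ of law $\bP_\cA$; since $L / \lceil \rho L \rceil > \bE_\cA(h)$ and $\bP_\cA(h \geq n) = O(e^{-\kappa n^\gamma})$, a Fuk--Nagaev-type large deviation estimate for sums of subexponential summands yields $\bP(T_{\lceil \rho L \rceil} > L) = O(e^{-c L^\gamma})$. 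Hence $\bE|X_0 - X_0^{(L)}| = O(e^{-c' L^\gamma})$.

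Next, Lemma~\ref{lem:nngh} and the standard coupling inequality imply that the $\beta$-mixing coefficient of $(g_k)$ at lag $k$ satisfies $\beta_k \leq \bP(T \geq k) = O(e^{-\delta k^\gamma})$. Since $X_0^{(L)}$ is $\sigma(g_0, \ldots, g_L)$-measurable, $X_n$ is $\sigma(g_n, g_{n+1}, \ldots)$-measurable, and both are bounded by $\|\psi\|_\infty$, the standard covariance inequality under $\beta$-mixing gives, for $L < n$,
\[
  |\Cov(X_0^{(L)}, X_n)|
  \;\leq\; 4 \|\psi\|_\infty^2 \, \beta_{n-L}
  \;=\; O(e^{-\delta (n-L)^\gamma}).
\]
Combining this with the trivial bound $|\Cov(X_0 - X_0^{(L)}, X_n)| \leq 2 \|\psi\|_\infty \, \bE|X_0 - X_0^{(L)}|$ and choosing $L = \lfloor n/2 \rfloor$, both contributions are $O(e^{-\delta' n^\gamma})$ for some $\delta' > 0$, proving the claim.

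The main obstacle is the large deviation estimate $\bP(R_L < \rho L) = O(e^{-c L^\gamma})$ in the subexponential regime $\gamma < 1$: Cramér-type exponential tilting is unavailable for subexponential tails, so one must exploit the one-big-jump structure, bounding separately the event that a single $h_i$ already exceeds a fraction of $L$ (via the union bound and the tail estimate on $h$) and the event that all renewals stay moderate while their sum still exceeds $L$ (via Bernstein-type inequalities applied to truncated variables, or directly via Fuk--Nagaev). In the exponential case $\gamma = 1$ this step collapses to a routine Chernoff estimate.
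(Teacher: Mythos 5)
Your proof is correct and follows essentially the same route as the paper: truncate $X_0$ to a function of finitely many coordinates, bound the truncation error via a renewal/large-deviation argument on the number of visits to $S_0$, and control the remaining covariance through the coupling time $T$ (here phrased via the $\beta$-mixing coefficient, which is indeed bounded by $\bP(T \geq k)$ as in Lemma~\ref{lem:nngh}). The only remark worth making is that your truncation estimate $\bE|X_0 - X_0^{(L)}| = O(\mathrm{e}^{-c'L^\gamma})$ is precisely the content of Proposition~\ref{prop:boundofthedelta1first}, which the paper invokes directly and proves in Appendix~\ref{sec:prop:boundofthedelta1first} by the same Bernstein/Borovkov-type deviation bound on the inter-visit times that you sketch.
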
 
To prove the lemma above it suffices to  follow the proof of Lemma 3.3 in \cite{CDKM18} and to take into account Lemma \ref{lem:nngh} and Proposition \ref{prop:boundofthedelta1first} below whose proof is postponed to Appendix~\ref{sec:prop:boundofthedelta1first}.

\begin{prop}
  \label{prop:boundofthedelta1first}
  Let $\delta_n \colon \Omega \to \bR$,
  \[
    \delta_n (g_0, g_1, \ldots)
    = \sup \bigl| \psi(g_0, \ldots, g_n, g_{n+1}, g_{n+2}, \ldots) - 
    \psi(g_0, \ldots, g_n, \tg_{n+1}, \tg_{n+2}, \ldots) \bigr|
    \, ,
  \]
  where the supremum is taken over all possible
  $(\tg_{n+1}, \tg_{n+2}, \ldots)$.
  Then there exists $\delta > 0$ such that
  \[
    \bE (\delta_n)
    = O(\mathrm{e}^{-\delta n^\gamma})
    \, .
  \]
\end{prop}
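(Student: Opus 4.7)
The plan is to exploit the Hölder continuity of $\psi$ with respect to the separation metric $d$, and then reduce the problem to a large-deviation estimate for a renewal process whose inter-arrival times $h$ have a subexponential moment. Since $\psi \colon \Omega \to \bR$ is Hölder continuous with some exponent $\alpha > 0$ and constant $C$ relative to the separation metric \eqref{distanceseparation}, any two sequences $(g_0, \ldots, g_n, g_{n+1}, \ldots)$ and $(g_0, \ldots, g_n, \tg_{n+1}, \ldots)$ that differ only after index $n$ satisfy
\[
  \bigl|\psi(g_0, \ldots, g_n, g_{n+1}, \ldots) - \psi(g_0, \ldots, g_n, \tg_{n+1}, \ldots)\bigr|
  \leq C \lambda^{-\alpha N_n},
  \quad N_n := \#\{1 \leq k \leq n : g_k \in S_0\}.
\]
Taking the supremum and expectation, the proof reduces to showing $\bE[\lambda^{-\alpha N_n}] = O(\mathrm{e}^{-\delta n^\gamma})$ for some $\delta>0$.

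Next I would interpret $N_n$ as a renewal counting variable. Let $R_0 \geq 1$ be the first index $k$ with $g_k \in S_0$, and let $R_i - R_{i-1}$ for $i \geq 1$ be the subsequent inter-arrival times. By construction of the Markov chain in Section~\ref{sec:MC}, the increments $R_i - R_{i-1}$ are i.i.d.\ with law $\bP_\cA \circ h^{-1}$ and are independent of $R_0$; moreover $R_0$ itself is stochastically dominated by (a shift of) $h$ under stationarity, because under $\nu$ one computes $\bP(R_0 \geq r) = \bP_\cA(h \geq r)/\bE_\cA(h)$. Since $\int \mathrm{e}^{\delta' h^\gamma}\,d\bP_\cA < \infty$ by assumption, both $R_0$ and each $R_i - R_{i-1}$ have tails $O(\mathrm{e}^{-c r^\gamma})$. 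I would then decompose
\[
  \bE[\lambda^{-\alpha N_n}] \leq \lambda^{-\alpha M} + \bP(N_n < M)
\]
and choose $M = \lfloor n/(4\bE_\cA h)\rfloor$, so that the first term is $\exp(-c' n)$, which is $O(\mathrm{e}^{-\delta n^\gamma})$ for $\gamma \in ]0,1]$.

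It remains to estimate $\bP(N_n < M) = \bP\bigl(R_0 + \sum_{i=1}^{M-1}(R_i - R_{i-1}) > n\bigr)$. I would split this into $\bP(R_0 > n/2)$, which is immediately $O(\mathrm{e}^{-c(n/2)^\gamma})$ by the stretched exponential tail of $R_0$, and $\bP\bigl(\sum_{i=1}^{M-1}(R_i - R_{i-1} - \bE_\cA h) > n/4\bigr)$, to which I would apply a Bernstein-type deviation inequality for i.i.d.\ sums with subexponential moments: for variables $\xi_i$ with $\bE \mathrm{e}^{\delta'|\xi_i|^\gamma}<\infty$, one has $\bP(\sum_{i=1}^M (\xi_i - \bE\xi_i) > t) \leq \exp\bigl(-c \min(t^2/M,\,t^\gamma)\bigr)$ (Bernstein's inequality when $\gamma=1$; for $\gamma < 1$ this is the Fuk--Nagaev / Borell--type estimate available in standard references such as Rio's book). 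With $M \asymp n$ and $t \asymp n$ this yields $\exp(-c'' n^\gamma)$, completing the bound.

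The main technical obstacle is the last step, namely securing the Bernstein-type deviation bound uniformly in $\gamma \in ]0,1]$ and correctly tracking the constants. For $\gamma = 1$ the classical Bernstein inequality applies directly, whereas for $\gamma < 1$ one must invoke a subexponential concentration inequality; this is well-known but requires pinpointing a clean reference or reproducing a short truncation-plus-moment-generating-function argument. The renewal structure and the independence between $R_0$ and the subsequent inter-arrival times (which follows from the innovation representation \eqref{defgnviaU}--\eqref{defUforMC}) are essential and should be stated carefully before the Bernstein step is invoked.
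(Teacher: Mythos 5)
Your proposal follows essentially the same route as the paper: bound $\delta_n$ by a geometric decay in the number of visits to the base, pass to the renewal structure of returns (the paper uses the atom $S_c=\{(w,h(w)-1)\}$ and notes that visits to $S_c$ and to $S_0$ differ by a one-step shift, so your direct use of $S_0$ is equivalent), split the event according to whether the number of returns is below a linear threshold, and close with a Bernstein/Fuk--Nagaev type inequality for the iid inter-arrival times with subexponential moments, citing Borovkov and Merlev\`ede--Peligrad--Rio for the case $\gamma<1$. One small slip: under $\nu$ one has $\bP(R_0=r)=\bP_\cA(h\geq r)/\bE_\cA(h)$ rather than $\bP(R_0\geq r)=\bP_\cA(h\geq r)/\bE_\cA(h)$, though the stretched-exponential tail conclusion is unaffected.
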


\section{Proof of Theorem \ref{thm:NUE}} \label{sectionproofNUE}

Let $\alpha = 1 + \gamma^{-1}$. Our goal is to
prove the ASIP for the random process $(S_n)$, driven by the stationary Markov chain $(g_n)$,
as defined in Section~\ref{sec:prelim} (see the definition \eqref{defXnwithgn}). Recall also that following~\cite[Appendix~A]{CDKM18}, we can and do assume without loss of generality that  the Markov chain $(g_n)$ is aperiodic.

The variance of the Brownian motion in the ASIP is, necessarily,
\[
  c^2= \lim_{n \to \infty } \frac{1}{n} \int |S_n (\varphi )|^2 \, d\mu =  \lim_{n \to \infty } \frac{\bE (S_n^2)}{n} 
  \, .
\] 
Technically, we prove the following strong approximation:
one can redefine $(S_n)_{n \geq 1}$ without changing its distribution
on a  probability space (possibly richer than $(\Omega, \bP_{\Omega} )$),
on which there exists a sequence $(N_i)_{i \geq 1}$ of iid centered Gaussian r.v.'s with variance 
$c^2 $ such that 
\begin{equation}
  \label{resultSAter}
  \sup_{k \leq n } \Bigl| S_n - \sum_{i=1}^k N_i \Bigr| 
  = O( (\log n)^{ \alpha} )  \quad \text{a.s.} 
\end{equation}

Assume first that $c^2=0$. Note that, for any $\varepsilon >0$, 
 \[
   \sum_{n\geq 1} m \left ( \tau > \varepsilon  (\log n)^{ \alpha}  \right )
   < \infty \, .
  \]
Therefore using the same arguments as those developed in the proof of Corollary 5.5 in \cite{CDKM18}, we can conclude that  Theorem \ref{thm:NUE} holds with $c^2=0$. 

Through the reminder of this section, we assume that $c^2>0$ and use the notation $b_n = \lceil (\log n)/(\log 3) \rceil$
for $n \geq 2$ (so that $b_n$ is the unique integer such that $3^{b_n -1} < n \leq 3^{b_n}$).
Let $\delta$ be the minimum of the constants $\delta$ involved in Lemmas~\ref{lem:nngh} and~\ref{lmacovtildeXbis} and Proposition~\ref{prop:boundofthedelta1first}.
Fix $\kappa > 0$ so that
$\delta (2^{-1} \kappa)^{\gamma}  \geq  \log 3$. 
For $\ell \geq 0$, let
\begin{equation} \label{defml} 
  m_{\ell}= [  \kappa \ell^{1/\gamma}]  \vee 1
  \, .
\end{equation}

\medskip

Following~\cite{BLW14}, the proof of \eqref{resultSAter}
it is divided into several steps.

\subsection{Step 1}

Let
\[
X_{\ell , k} =  \bE_g \big ( \psi (g_k, g_{k+1}, \ldots, g_{k+m_{\ell}},   ({\tilde g}_{i})_{i \geq  k+m_{\ell} +1} )  \big ) \, , 
\]
where $\bE_g$ denotes the conditional expectation given $g=(g_n )_{n \geq 0}$. 
Here $({\tilde g}_{i})_{i \geq  k+m_{\ell} +1} $ is defined as follows: ${\tilde g}_{k+m_{\ell} +1} = U (g_{k+m_{\ell} } , \eps'_{k+m_{\ell} +1})$ and 
${\tilde g}_{i+1} = U (  {\tilde g}_{i}, \eps'_{i+1} ) $ for any $i > k+m_{\ell} $, where $(\eps_i')_{i \geq 1}$ is an independent copy of $(\eps_i)_{i \geq 1}$, independent of $g_0$, and $U$ is given by \eqref{defUforMC}. Note that the $X_{\ell , k}$'s are centered. 
Define
\[
  W_{\ell, i} = \sum_{k=1+3^{\ell-1}}^{ i  +  3^{\ell-1}}  X_k
  \, , \qquad  
  {\overline W}_{\ell, i} = \sum_{k=1+3^{\ell-1}}^{ i  +  3^{\ell-1}}  {X}_{\ell , k }
  \qquad \text{and} \qquad
  W'_{\ell, i} = W_{\ell, i} - {\overline W}_{\ell, i}  \, .
 \]
The fist step is to prove
\begin{lemma}
\begin{equation} \label{stepone}
\sum_{\ell=1}^{ b_n -1}  { W}'_{\ell, 3^{\ell} - 3^{\ell-1} }   +  { W}'_{b_n,  n - 3^{b_n-1}  }   = O( (\log n)^{ \alpha} )     \quad  \text{a.s.}
\end{equation}
\end{lemma}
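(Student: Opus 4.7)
The plan is to dominate the absolute value of the left hand side of~\eqref{stepone} by an $L^1$-controlled nondecreasing quantity and conclude via a routine Markov plus Borel--Cantelli argument along a dyadic subsequence.

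First I would establish the pointwise inequality
\[
|X_k - X_{\ell,k}| \leq \delta_{m_\ell}(g_k, g_{k+1}, \ldots)
\]
directly from the definitions of $X_{\ell,k}$ and $\delta_n$. Since $X_k = \psi(g_k, g_{k+1}, \ldots)$ does not depend on the auxiliary innovations $(\eps_i')_{i \geq 1}$, we have $X_k = \bE_g(X_k)$, so
\[
X_k - X_{\ell,k} = \bE_g\bigl[\psi(g_k, g_{k+1}, \ldots) - \psi(g_k, \ldots, g_{k+m_\ell}, \tg_{k+m_\ell+1}, \ldots)\bigr],
\]
and the integrand is dominated by $\delta_{m_\ell}(g_k, g_{k+1}, \ldots)$ by definition. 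Combined with stationarity and Proposition~\ref{prop:boundofthedelta1first} this gives $\bE|X_k - X_{\ell,k}| \ll \mathrm{e}^{-\delta m_\ell^\gamma}$. The choices $m_\ell = [\kappa \ell^{1/\gamma}] \vee 1$ and $\delta(\kappa/2)^\gamma \geq \log 3$ fixed just before the lemma are precisely tailored so that $m_\ell^\gamma \geq (\kappa/2)^\gamma \ell$ for all $\ell$ large, hence $\bE|X_k - X_{\ell,k}| \ll 3^{-\ell}$.

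Next, set $Z_\ell = \max_{0 \leq i \leq 2 \cdot 3^{\ell-1}} |W'_{\ell, i}|$. The triangle inequality gives
\[
\bE Z_\ell \leq \sum_{k=1+3^{\ell-1}}^{3^\ell} \bE|X_k - X_{\ell,k}| \ll 3^{\ell-1}\cdot 3^{-\ell} \ll 1,
\]
so $\bE Z_\ell$ is bounded uniformly in $\ell$, and the nondecreasing sequence $T_L := \sum_{\ell=1}^L Z_\ell$ satisfies $\bE T_L \ll L$. Applying Markov's inequality along the dyadic subsequence $L_j = 2^j$ yields $\bP(T_{L_j} > L_j j^2) \ll j^{-2}$, which is summable in $j$. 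Borel--Cantelli combined with the monotonicity of $T_L$ then gives $T_L \ll L(\log L)^2$ almost surely, for all $L$ large enough.

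To conclude, the left hand side of~\eqref{stepone} is bounded in absolute value by $T_{b_n}$, since $|W'_{\ell, 3^\ell - 3^{\ell-1}}| \leq Z_\ell$ and, using $n - 3^{b_n-1} \leq 2 \cdot 3^{b_n-1}$, also $|W'_{b_n, n - 3^{b_n-1}}| \leq Z_{b_n}$. With $b_n \ll \log n$ this gives the bound $\ll (\log n)(\log \log n)^2$, which is $o((\log n)^\alpha)$ because $\alpha = 1 + 1/\gamma > 1$. The only genuinely delicate point is the pointwise domination $|X_k - X_{\ell,k}| \leq \delta_{m_\ell}(g_k, g_{k+1}, \ldots)$, which is what converts the integrated decay supplied by Proposition~\ref{prop:boundofthedelta1first} into uniform $L^1$ control on the block maxima $Z_\ell$; from there on, the argument is a standard Markov plus Borel--Cantelli calculation on a dyadic subsequence.
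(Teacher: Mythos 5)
Your proposal is correct and uses the same core $L^1$ estimate as the paper, derived from Proposition~\ref{prop:boundofthedelta1first} via the choice of $m_\ell$ and $\kappa$ made just before the lemma. The only genuine difference is the final step that converts the $L^1$ control into the almost-sure bound: the paper observes that
\[
\sum_{\ell \geq 1} \ell^{-\alpha}\, \Bigl\|\max_{1 \leq i \leq 3^\ell - 3^{\ell-1}} |W'_{\ell,i}|\Bigr\|_1 < \infty
\]
(because $\alpha > 1$ and the $L^1$ norms are uniformly bounded) and then invokes Kronecker's lemma, obtaining $\sum_{\ell=1}^{b_n} \max_i |W'_{\ell,i}| = o(b_n^\alpha) = o((\log n)^\alpha)$ a.s.; you instead pass through the nondecreasing sums $T_L$, apply Markov's inequality along a dyadic subsequence, and conclude via Borel--Cantelli and monotonicity. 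Both are routine, and your variant even yields the marginally sharper bound $O((\log n)(\log\log n)^2)$, which of course suffices since $\alpha > 1$. A small expository merit of your write-up is that you make explicit the pointwise domination $|X_k - X_{\ell,k}| \leq \delta_{m_\ell}(g_k, g_{k+1}, \ldots)$ (using that $X_k$ is $g$-measurable and that $\delta_{m_\ell}$ is a supremum over all tails), which the paper uses implicitly when passing from Proposition~\ref{prop:boundofthedelta1first} to the bound on $\|X_k - X_{\ell,k}\|_1$.
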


\begin{proof}
  By Proposition~\ref{prop:boundofthedelta1first},
  \[
  \Bigl\| \max_{1 \leq i \leq 3^\ell - 3^{\ell-1}} \bigl|W'_{k, \ell} \bigr| \Bigr\|_1
  \leq  \sum_{k=1+3^{\ell-1}}^{ 3^{\ell}} \Vert X_{k}  - X_{\ell, k }  \Vert_1 
  \ll 3^\ell \exp ( -\delta m_\ell^{\gamma} )
  \leq 3^\ell \exp (-  \delta \times (2^{-1} \kappa )^{\gamma}  \ell )    \, .
  \]
  Using $\delta \times (2^{-1} \kappa )^{\gamma}  \geq  \log 3 $,
  \[
    \sum_{\ell \geq 1}  \ell^{-\alpha} \Bigl\| \max_{1 \leq i \leq 3^\ell - 3^{\ell-1}} 
    \bigl|W'_{k, \ell} \bigr| \Bigr\|_1 
    < \infty \, .
  \]
  Now~\eqref{stepone} follows from the Kronecker's lemma.
\end{proof}

\subsection{Step 2.}

For $k \geq m_{\ell }+1$, let
\begin{equation} \label{deftildeXkl}
  {\tilde X}_{\ell , k} = \bE ( X_{\ell , k}  | \eps_{k - m_{\ell}} , \ldots,  \eps_{k + m_{\ell}}  )
  \, .
\end{equation}
Set $\ell_0 = \inf \{ \ell \geq 1 \, : \, 3^{\ell -1} \geq \kappa \ell^{1/\gamma} \}$. For $\ell \geq \ell_0$, define  
\[
  { \widetilde W}_{\ell, i} = \sum_{k=1+3^{\ell-1}}^{ i  +  3^{\ell-1}}  {\tilde X}_{\ell, k }
  \qquad \text{and} \qquad
 { W}''_{\ell, i}  =   { \overline W}_{\ell, i} -{ \widetilde W}_{\ell, i }
 \, .
\]
In the second step we prove
\begin{lemma}
\begin{equation} \label{steptwo}
\sum_{\ell=\ell_0}^{ b_n -1}  { W}''_{\ell, 3^{\ell} - 3^{\ell-1} }  +  { W}''_{b_n,  n - 3^{b_n-1}  } = O( (\log n)^{ \alpha} )  \quad  \text{a.s.}
\end{equation}
\end{lemma}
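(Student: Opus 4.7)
The plan is to mirror the proof of \eqref{stepone}. The only new ingredient needed is an $L^1$ bound of the form
\[
\|X_{\ell,k} - \tilde X_{\ell,k}\|_1 = O(\mathrm{e}^{-\delta' m_\ell^\gamma})
\]
for some $\delta' > 0$, valid for every $k \geq m_\ell + 1$. Once such a bound is available, summation over $k \in \{1 + 3^{\ell-1}, \ldots, 3^\ell\}$ gives
\[
\Bigl\| \max_{1 \leq i \leq 3^\ell - 3^{\ell-1}} |W''_{\ell,i}| \Bigr\|_1 \ll 3^\ell \, \mathrm{e}^{-\delta' m_\ell^\gamma},
\]
and calibrating $\kappa$ so that $\delta'(2^{-1}\kappa)^\gamma \geq \log 3$ (at worst shrinking the $\delta$ fixed at the beginning of Section~\ref{sectionproofNUE}) forces this to be $O(1)$. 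Hence $\sum_{\ell \geq \ell_0} \ell^{-\alpha} \bigl\|\max_i |W''_{\ell,i}|\bigr\|_1 < \infty$, and Kronecker's lemma handles both the main sum and the trailing partial block $W''_{b_n, n-3^{b_n-1}}$ exactly as in the proof of \eqref{stepone}.

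To establish the $L^1$ bound I would use a double coupling argument based on Lemma~\ref{lem:nngh}. Writing $X_{\ell,k}$ as a deterministic functional
\[
X_{\ell,k} = G_\ell(g_{k-m_\ell}, \eps_{k-m_\ell+1}, \ldots, \eps_{k+m_\ell})
\]
(the tail beyond time $k + m_\ell$ is integrated out against the independent innovations $(\eps'_i)$), I would introduce an auxiliary chain $(g^*_j)_{j \geq k-m_\ell-1}$ with $g^*_{k-m_\ell-1} \sim \nu$ drawn independently of $g$ and of the $\eps_j$'s, and evolving by $g^*_j = U(g^*_{j-1}, \eps_j)$ for $j \geq k - m_\ell$. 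By Lemma~\ref{lem:nngh} applied to the pair $(g, g^*)$ translated in time, the probability that the two chains have not yet coupled by time $k$ is $O(\mathrm{e}^{-\delta m_\ell^\gamma})$; on the complementary event $g_j = g^*_j$ for all $j \geq k$ and hence $X_{\ell,k} = Z^*$ where
\[
Z^* := G_\ell(g^*_{k-m_\ell}, \eps_{k-m_\ell+1}, \ldots, \eps_{k+m_\ell}).
\]
Boundedness of $\psi$ then yields $\|X_{\ell,k} - Z^*\|_2^2 \leq 4 \|\psi\|_\infty^2 \bP(g_k \neq g^*_k) = O(\mathrm{e}^{-\delta m_\ell^\gamma})$.

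Next, let $\mathcal{F}_k = \sigma(\eps_{k-m_\ell}, \ldots, \eps_{k+m_\ell})$. Since $\tilde X_{\ell,k}$ is the $L^2$-projection of $X_{\ell,k}$ onto $L^2(\mathcal{F}_k)$ and $\bE[Z^* \mid \mathcal{F}_k]$ is $\mathcal{F}_k$-measurable,
\[
\|X_{\ell,k} - \tilde X_{\ell,k}\|_2 \leq \|X_{\ell,k} - \bE[Z^* \mid \mathcal{F}_k]\|_2 \leq \|X_{\ell,k} - Z^*\|_2 + \|Z^* - \bE[Z^* \mid \mathcal{F}_k]\|_2.
\]
For the second term I would introduce a further copy $\tilde Z^*$ built by redrawing $\tilde g^*_{k-m_\ell-1} \sim \nu$ independently of everything else but reusing the same innovations, so that $Z^*$ and $\tilde Z^*$ are conditionally iid given $\mathcal{F}_k$ and therefore $\|Z^* - \bE[Z^* \mid \mathcal{F}_k]\|_2^2 = \tfrac{1}{2} \bE[(Z^* - \tilde Z^*)^2]$. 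A second application of Lemma~\ref{lem:nngh} to $(g^*, \tilde g^*)$ gives $\bE[(Z^* - \tilde Z^*)^2] = O(\mathrm{e}^{-\delta m_\ell^\gamma})$, so $\|X_{\ell,k} - \tilde X_{\ell,k}\|_1 \leq \|X_{\ell,k} - \tilde X_{\ell,k}\|_2 = O(\mathrm{e}^{-(\delta/2) m_\ell^\gamma})$, yielding $\delta' = \delta/2$.

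The main technical subtlety is to arrange the two couplings so that $\mathcal{F}_k$ coincides exactly with the innovation window used to define $\tilde X_{\ell,k}$ (in particular the choice to start $g^*$ at time $k-m_\ell-1$, so that $\eps_{k-m_\ell}$ is the first shared innovation and lies in $\mathcal{F}_k$); once this bookkeeping is done, the rest is a literal transcription of the proof of \eqref{stepone}.
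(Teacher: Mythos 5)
Your proof is correct and follows essentially the same strategy as the paper's: reduce via Kronecker's lemma to showing $\sum_{\ell \geq \ell_0} \ell^{-\alpha}\sum_{k}\|X_{\ell,k}-\tilde X_{\ell,k}\|_1 < \infty$, and then bound $\|X_{\ell,k}-\tilde X_{\ell,k}\|_1$ by an exponentially small quantity via the coupling of Lemma~\ref{lem:nngh}. The only difference is that the paper directly cites estimate (4.10) of \cite{CDKM18}, which gives $\|X_{\ell,k}-\tilde X_{\ell,k}\|_1 \leq 2|\psi|_\infty \bP(T\geq m_\ell)$ at once, whereas you re-derive an equivalent bound from scratch by a double coupling; your detour through $L^2$ halves the exponent of $m_\ell^\gamma$, which you correctly note is harmless after enlarging $\kappa$ (enlarging $\kappa$, rather than ``shrinking $\delta$,'' is the right adjustment).
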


\begin{proof}
  The result follows from the Kronecker's lemma, once we show that
  \begin{equation} \label{krosteptwo}
    \sum_{\ell \geq \ell_0} \ell^{-\alpha} \sum_{k=3^{\ell-1}+1}^{3^\ell}
    \bigl\| X_{ \ell , k }  - {\tilde X}_{\ell , k} \bigr\|_1
    < \infty \, .   
  \end{equation}
  By the estimate \cite[(4.10)]{CDKM18}, 
  \[
    \sum_{\ell \geq \ell_0} \ell^{ - \alpha} 
    \sum_{k= 3^{\ell-1}+1}^{3^\ell}
    \bigl\| X_{ \ell , k }  - {\tilde X}_{\ell , k} \bigr\|_1
    \leq 2 |\psi|_{\infty}
    \sum_{\ell \geq \ell_0} \frac{3^{\ell}}{\ell^{\alpha}} \bP (T \geq  m_{\ell})
    \, .
  \]
  By Lemma~\ref{lem:nngh}, $\bP(T \geq n) = O(\exp(-\delta n^\gamma))$.
  Since $m_\ell \geq \kappa \ell^{1/\gamma}/2$ and $\delta (2^{-1} \kappa )^{\gamma}  \geq  \log 3 $,
  the bounds~\eqref{krosteptwo} and \eqref{steptwo} follow.
\end{proof}

\subsection{Step 3. (\textit{Conditional Gaussian approximation})}

Set 
\[
 {\tilde S}_n= \sum_{\ell=\ell_0}^{b_n-1} {\widetilde W}_{\ell, 3^\ell - 3^{\ell-1}} +{\widetilde W}_{b_n,  n - 3^{b_n-1}  }  \, . 
\]
Let $K_0 = \inf \{ k \geq 1: m_k \leq 4^{-1} 3^{k-2} \}$. For
$\ell \geq K_0$, let 
\[
q_\ell = [ 3^{\ell-2} / m_\ell] -2 \, .
\]
Note that $q_\ell \rightarrow \infty$, as $\ell \rightarrow \infty$ and $q_\ell \geq 2$ whenever $\ell \geq K_0$. For $\ell \geq K_0$ and $j=1, \ldots, q_\ell$, set 
\begin{equation} \label{defBkj}
B_{\ell,j} =  \sum_{i=1+ (6j-1) m_\ell }^{ (6 j +5) m_\ell} {\tilde X}_{\ell,i +m_\ell +3^{\ell-1}} \, . 
\end{equation}
Let $B_{\ell,j} = 0$ if $\ell < K_0$.  In what follows, we assume that  $n \geq N_0= 3^{K_0}$. Define
\begin{equation} \label{defSndiamond}
S_n^{\diamond} =  \sum_{\ell=K_0}^{b_n-1} \sum_{j=1}^{q_\ell}B_{\ell,j}  + \sum_{j=1}^{\tau_n}B_{b_n,j}  \, , \, \text{ where } \tau_n = \Big [  \frac{ n-3^{b_n-1}}{6m_{b_n}}\Big ] -2 \, .
\end{equation}
Note that $\tau_n \leq q_{b_n}$. Moreover, since $\Vert {\tilde X}_{k,i} \Vert_{\infty} \leq |\psi|_{\infty} $ a.s., we infer  that there exists a positive constant $C$ not depending on $n$ such that 
\begin{equation} \label{p3KMT}
\max_{N_0 \leq i \leq n} \big |  {\tilde S}_i  - S_i^{\diamond} \big |  \leq C      \sum_{k=1}^{b_n}  m_k  =O( ( \log n )^{ \alpha}) \   \text{ a.s.} 
\end{equation}
%So, as $n \rightarrow \infty$, 
%\begin{equation} \label{p3KMT}
%\max_{N_0 \leq i \leq n} \big |  {\tilde S}_i  - S_i^{\diamond} \big | = O (  ( \log n )^{\alpha }) \, \text{ ${\mathbb P}_{\nu}$-a.s.}
%\end{equation}
Taking into account  \eqref{stepone}, \eqref{steptwo} and \eqref{p3KMT}, we see that (using for instance Lemma 4.1 of Berkes-Liu-Wu \cite{BLW14}) \eqref{resultSAter} is reduced to prove that one can redefine $( S^{\diamond}_{n})_{n \geq 1}$ without changing its distribution on a (richer) probability space on which 
there exists iid random variables $(N_i)_{i \geq 1}$ with common distribution ${\mathcal N} (0, c^2)$, such that,
\begin{equation} \label{KMTSRbis}
S_n^{\diamond} - \sum_{i=1}^n N_i = O(( \log n)^{\alpha}) \, \text{ a.s.}
\end{equation}
where we recall that $S_n^{\diamond}$ is defined in \eqref{defSndiamond}. 
To prove this strong approximation result, we proceed as in steps 3.2 and 3.3 of the proof of Theorem 2.1 in Berkes-Liu-Wu \cite{BLW14}. Their step 3.2 consists in  showing a conditional Gaussian approximation that is the object of our step 3. This requires several preliminary notations that we recall below for an easy understanding of the proof. With this aim, note first that for any 
integer $i \in [3^{\ell-1}+1, 3^\ell]$ with $\ell \geq K_0$, we can write
\[
{\tilde X}_{\ell,i} =G_{\ell} (\varepsilon_{i-m_\ell}, \ldots, \varepsilon_{i + m_{\ell}}) \, ,
\]
where $G_{\ell}$ is a bounded measurable function. So ${\tilde X}_{\ell , i}$ is a bounded measurable function of  $(  \eps_{i - m_{\ell}} , \ldots,  \eps_{i + m_{\ell}}  )$.

Define, for $j \geq 1$,
\[
{\mathcal J}_{\ell,j} = \{ 3^{\ell-1} +  (6j-1) m_\ell +  k , k=1,2, \ldots, 2m_\ell\}  \, , 
\]
\[
{\bm \eta}_{\ell,j} = (\eps_{i}, i \in {\mathcal J}_{\ell,j}  )  \,  \text{ and } \, {\bm \eta}= ({\bm \eta}_{\ell,j}, j =1, \ldots, q_\ell+1)_{\ell=K_0}^{\infty} \, .
\]
Note that 
\begin{multline*}
B_{\ell,j} =  \sum_{i=1+ (6j-1) m_\ell }^{ (6 j +1) m_\ell} {\tilde X}_{\ell,i +m_\ell +3^{\ell-1}}  +  \sum_{i=1+ (6j +1) m_\ell }^{ (6 j +3) m_\ell} {\tilde X}_{\ell,i +m_\ell +3^{\ell-1}}  +  \sum_{i=1+ (6j+3) m_\ell }^{ (6 j +5) m_\ell} {\tilde X}_{\ell,i +m_\ell +3^{\ell-1}}   \\ := H_\ell \big ( {\bm \eta}_{\ell,j}, \{\eps_{i + 3^{\ell-1}}\}_{ 1+(6j +1) m_\ell  \leq i  \leq (6j +5) m_\ell} , {\bm \eta}_{\ell , j+1} \big ) \, .
\end{multline*}
Let now $( u_{\ell}, \ell \in \N )$ be elements of ${\mathcal A}$ and set ${\bf u} = ({\bf u }_{k,j}, j =1, \ldots, q_k+1)_{k=K_0}^{\infty}$   
 where for any $j=1, \ldots, q_k+1$, ${\bf u }_{k,j} = (u_{\ell}, \ell \in {\mathcal J}_{k,j}  )$. The idea is to use the fact that, on the set $\{{\bm \eta} = {\bf u} \}$,  $(B_{\ell,j} ( {\bf u} ))_{j=1, \ldots, q_\ell}$ are  independent between them. With this aim, define the following random functions: for $j \geq 1$, 
\[
F^{(1)}_{k,j} ({\bf u }_{k,j})  =  \sum_{i=1+ (6j-1) m_k }^{ (6 j +1) m_k} G_k \big ( u_{i+3^{k-1}}, \ldots, u_{(6 j +1) m_k+3^{k-1}}, \varepsilon_{(6 j +1) m_k +1+3^{k-1}}, \ldots, \varepsilon_{ i+2m_k+3^{k-1}} \big ) \, , 
\]
\[
F^{(2)}_{k,j}   =  \sum_{i=1+ (6j +1) m_k }^{ (6 j +3) m_k}  G_k \big ( \varepsilon_{i+3^{k-1}}, \ldots, \varepsilon_{(6j+1)m_k +3^{k-1}}, \varepsilon_{(6j+1)m_k +1+3^{k-1}}, \ldots, \varepsilon_{ i+2m_k+3^{k-1}} \big ) \, , 
\]
and
\[
F^{(3)}_{k,j} ({\bf u }_{k,j +1})   =\sum_{i=1+ (6j+3) m_k}^{ (6 j +5) m_k} G_k \big ( \varepsilon_{i+3^{k-1}}, \ldots, \varepsilon_{(6j+5)m_k +3^{k-1}}, u_{(6j+5)m_k +1+3^{k-1}}, \ldots, u_{ i+2m_k+3^{k-1}} \big ) \, .
\]
Note that $F^{(2)}_{k,j} $ is centered but not the two others processes defined above. Their  mean functions are denoted by
\[
{\Lambda }_{k,1}({\bf u }_{k,j}) := \E F^{(1)}_{k,j}  ({\bf u }_{k,j}) \, \text{ and } \, {\Lambda }_{k,3}({\bf u }_{k,j+1}) = \E F^{(3)}_{k,j}  ({\bf u }_{k,j+1}) \, .
\]
Note that, for any $j=1, \ldots, q_k+1$,   we have
\[
B_{k,j} = F^{(1)}_{k,j}  ({\bm \eta}_{k,j})  +  F^{(2)}_{k,j} +F^{(3)}_{k,j}  ({\bm \eta}_{k,j +1})  \, .
\]
Let us now introduce the centered process
\[
Y_{k,j} ( {\bf u }_{k,j}, {\bf u }_{k,j+1}) =   F^{(0,1)}_{k,j} ({\bf u }_{k,j})   +  F^{(2)}_{k,j} +   F^{(0,3)}_{k,j} ({\bf u }_{k, j +1})   \, ,
\]
where 
\[
 F^{(0,1)}_{k,j} ({\bf u }_{k,j})  =  F^{(1)}_{k,j} ({\bf u }_{k,j})  -  {\Lambda }_{k,1}({\bf u }_{k,j}) \, \text{ and } \,   F^{(0,3)}_{k,j} ({\bf u }_{k,j +1}) =F^{(3)}_{k,j}  ({\bf u }_{k,j +1}) - {\Lambda }_{k,3}({\bf u }_{k,j +1})  \, .
\]
Note that $Y_{k,j} ( {\bf u }_{k,j}, {\bf u }_{k,j+1})$, $j=1, \ldots, q_k$, $k \geq K_0$ are then mean zero independent random variables with variance function denoted by 
\[
V_{k} ( {\bf u }_{k,j}, {\bf u}_{k,j+1})  :=\Vert Y_{k,j} ( {\bf u }_{k,j}, {\bf u }_{k,j+1})  \Vert_2^2 \, . 
\]
Define then
\begin{equation} \label{defHna}
b_n ({\bf u}) = \sum_{k = K_0}^{b_n-1} \sum_{j=1}^{q_k} Y_{k,j} ( {\bf u }_{k,j}, {\bf u}_{k,j+1}) +  \sum_{j=1}^{\tau_n} Y_{b_n,j} ( {\bf u }_ {b_n,j}, {\bf u }_ {b_n,j+1}) \, .
\end{equation}
Using Theorem 1 in Sakhanenko \cite{Sa84} as it will be done later, it is possible to infer that we can strongly approximate $b_n ({\bf u}) $ by a Brownian motion and that the error in the strong approximation is of the right order. However, the variance of the approximating Brownian motion will be the  variance of $b_n ({\bf u}) $ that is
\[
{\rm var} (b_n ({\bf u}) ):= Q_n({\bf u}) = \sum_{k = K_0}^{b_n-1} \sum_{j=1}^{q_k} V_{k} ( {\bf u }_{k,j}, {\bf u }_{k,j+1})  +  \sum_{j=1}^{\tau_n} V_{b_n} ( {\bf u }_ {b_n,j}, {\bf u}_ {b_n,j+1}) \, .
\]
Since, for $k$ fixed, the random variables $(V_{k} ( {\bm \eta }_{k,j}, {\bm \eta }_{k,j+1}))_{j \geq 1}$ are not independent, this creates problems to proceed to the unconditional Gaussian approximation as done in Step 3.3 in Berkes-Liu-Wu 
\cite{BLW14}. This is the reason why Berkes-Liu-Wu 
\cite{BLW14} have introduced another process ${\Gamma}_n ({\bf u})$ and rather than approximating $b_n ({\bf u})$, they approximate the process
\begin{equation} \label{defHn0a}
H^{\circ}_n ({\bf u}) := b_n ({\bf u}) + {\Gamma}_n ({\bf u})  \, . 
\end{equation}
The process ${\Gamma}_n ({\bf u}) $ is defined as follows:
\begin{equation} \label{defGammana}
{\Gamma}_n ({\bf u}) = \sum_{k = K_0}^{b_n-1} L^{1/2}_k ({\bf u }_{k,1 }) \zeta_k +  L^{1/2}_{b_n} ({\bf u }_{b_n,1 }) \zeta_{b_n} \, ,
\end{equation}
where $(\zeta_{\ell})_{\ell \in {\mathbb Z}}$ is a sequence of iid standard normal random variables which is independent of 
$(\varepsilon_{\ell})_{\ell \in {\mathbb Z}}$,   
\[
L_k ({\bf u }_{k,j}) = \Vert    F^{(2)}_{k,j} +   F^{(0,3)}_{k,j} ({\bf u }_{k,j})  \Vert_2^2 \, , \, j=1, \ldots, q_k +1 \, , 
\]
with, for any $j=1, \ldots, q_k+1$,  
\[
 F^{(0,3)}_{k,j} ({\bf u }_{k,j})  =F^{(3)}_{k,j}  ({\bf u }_{k,j})    - {\Lambda }_{k,3}({\bf u }_{k,j}) \, , 
\]
\[
F^{(3)}_{k,j}  ({\bf u }_{k,j})   =\sum_{i=1+ (6j+3) m_k}^{ (6 j +5) m_k} G_k \big ( \varepsilon_{i+3^{k-1}}, \ldots, \varepsilon_{(6j+5)m_k +3^{k-1}}, u_{(6j-1)m_k +1+3^{k-1}}, \ldots, u_{ i-4m_k+3^{k-1}} \big ) \, ,
\]
and
\[ {\Lambda }_{k,3}({\bf u }_{k,j}) = \E F^{(3)}_{k,j}  ({\bf u }_{k,j}) \, .
\]
Note now that the variance of $H^{\circ}_n ({\bf u})$ is
\[
 Q^{\circ}_n({\bf u}) = \sum_{k = K_0}^{b_n-1} \Big  \{ \sum_{j=1}^{q_k} V_{k} ( {\bf u}_{k,j}, {\bf u}_{k,j+1})  + L_k ({\bf u}_{k,1 })  \Big \}+  \sum_{j=1}^{\tau_n} V_{b_n} ( {\bf u}_ {b_n,j}, {\bf u}_ {b_n,j+1}) + L_{b_n} ({\bf u}_{b_n,1 })\, .
\]
But denoting by
\[
V^{0}_{k} ( {\bf u}_{k,j})  :=\Vert  F^{(0,1)}_{k,j} ({\bf u}_{k,j})   +  F^{(2)}_{k,j} +   F^{(0,3)}_{k,j} ({\bf u}_{k,j})  \Vert_2^2 \, , 
\]
the following equality holds: for any positive integer $t$,
\[
L_k ({\bf u}_{k,1})  + \sum_{j=1}^t V_{k} ( {\bf u}_{k,j}, {\bf u}_{k,j+1})  =  \sum_{j=1}^t V^{0}_{k} ( {\bf u}_{k,j}) + L_k ({\bf u}_{k, t+1 }) \, . 
\]
Therefore, the variance of $H^{\circ}_n ({\bf u})$ can be rewritten as:
\begin{equation} \label{varianceofHN0a}
Q^{\circ}_n ({\bf u}) =  \sum_{k = K_0}^{b_n-1}  \Big \{ \sum_{j=1}^{q_k} V^{0}_{k} ( {\bf u}_{k,j}) + L_k ({\bf u}_{k,q_k+1  })   \Big \}   +   \sum_{j=1}^{\tau_n}   \big \{ V^{0}_{b_n} ( {\bf u}_ {b_n,j}) + L_{b_n} ({\bf u}_{b_n, \tau_n +1})   \big \}  \, .
\end{equation}
Since, the random variables $V^{0}_{k} ( {\bm \eta }_{k,j}) $, $j=1, \ldots, q_k, k \geq K_0$, are  independent, it will be then possible to proceed to an unconditional Gaussian approximation (see our step 4). As in Berkes-Liu-Wu \cite{BLW14}, for notational convenience in what follows, for $j=0$, we let  $Y_{k,0}  ( {\bf u}_{k,0}, {\bf u}_{k,1}) := L^{1/2}_k ({\bf u}_{k,1})\zeta_k$. With all the notations above, it follows that
\[
H^{\circ}_n ({\bf u}):= b_n ({\bf u})  +
{\Gamma}_n ({\bf u})  =  \sum_{k = K_0}^{b_n-1} \sum_{j=0}^{q_k} Y_{k,j} ( {\bf u}_{k,j}, {\bf u}_{k,j+1}) +  \sum_{j=0}^{\tau_n} Y_{b_n,j} ( {\bf u}_ {b_n,j}, {\bf u}_ {b_n,j+1})  \, .
\]

\smallskip

The step 3.2 in Berkes-Liu-Wu \cite{BLW14} consists in applying Theorem 1 in Sakhanenko \cite{Sa06}. We shall rather use Theorem 1 in  Sakhanenko \cite{Sa84} (for an easy reference see Theorem A in \cite{Sh95}). With this aim, we  set, for any $k \geq  K_0$  and any $j \geq 0$, 
\[
\zeta_{k,j} = k^{-1/\gamma} Y_{k,j} ( {\bf u}_{k,j}, {\bf u}_{k,j+1})  \, .
\]
Note now that for any $k \geq  K_0$ and any $j \geq 1$,  
\[
\Vert Y_{k,j} ( {\bf u}_{k,j}, {\bf u}_{k,j+1}) \Vert_{\infty} \leq  10  |\psi |_{\infty} m_k  \leq  \kappa_1 k^{1 / \gamma} \  , 
\]
(where $\kappa_1=10 \kappa  |\psi |_{\infty} $) implying that, for any $t >0$, 
\[
t \E \big ( | \zeta_{k,j}  |^3 \mathrm{e}^{t |\zeta_{k,j}  |} \big ) \leq  t \kappa_1   \mathrm{e}^{t \kappa_1 }  \E \big ( |\zeta_{k,j} |^2 \big ) \, .
\]
So, if  $0 < t \leq 1/(2 \kappa_1  ) $ (implying  $t \kappa_1 \mathrm{e}^{t \kappa_1 } \leq 1$), it follows that, for any $k \geq  K_0$ and any $j \geq 1$, 
\[
t \E \big ( | \zeta_{k,j}  |^3 \mathrm{e}^{t |\zeta_{k,j}  |} \big )  \leq  \E \big (|\zeta_{k,j} |^2 \big ) \, .
\]
On another hand 
\[
\vert  L^{1/2}_k ({\bf u}_{k,1})  \vert \leq 6  |\psi |_{\infty} m_k   \leq \kappa_1 k^{1 / \gamma}  \, .
\]
Moreover, for any positive integer $\sigma$, any positive $t$ and any standard Gaussian r.v. $Z$, 
\begin{equation}  \label{trivialgaussian}
\E \big ( |Z|^3 \mathrm{e}^{ t \sigma |Z|} \big )  \leq 2 \mathrm{e}^{t^2 \sigma^2 /2} \Big ( \frac{ ( 2 + (\sigma t)^2  ) \mathrm{e}^{-t^2 \sigma^2 /2}}{\sqrt{2 \pi}}  + 3 \sigma t + (\sigma t )^3  \Big ) : = g(\sigma t ) \, .
\end{equation} 
Applying the  inequality above with  $ \sigma = \kappa_1 $, it follows that,  for any $k \geq  K_0$ and any $ t >0$, 
\[
t \E  \big ( |\zeta_{k,0} |^3 \mathrm{e}^{t |\zeta_{k,0} |} \big ) 
\leq \vert k^{- 1/ \gamma} L^{1/2}_k ({\bf u}_{k,1})  \vert^2  ( \kappa_1  t )   g( \kappa_1 t  ) := \E \big ( |\zeta_{k,0} |^2 \big )   ( \kappa_1  t )   g( \kappa_1 t  )   \, . 
\]
Since there exists $\kappa_2$ such that for any positive $t $ such that $t \leq  1/\kappa_2 $,  we have $( \kappa_1  t )   g( \kappa_1  t  ) \leq 1$, we get that for any  $t \leq  1/\kappa_2$ and any $k \geq K_0$, 
\[
t \E  \big ( |\zeta_{k,0} |^3 \mathrm{e}^{t |\zeta_{k,0} |} \big ) 
\leq   \E \big ( |\zeta_{k,0} |^2 \big )  \, . 
\]
So, overall, for $  t := K =  (  \max (2 \kappa_1 , \kappa_2 ) )^{-1}$, we get that, for any $k \geq  K_0$ and any $j \geq 0$,
\[
t \E \big ( | \zeta_{k,j}  |^3 \mathrm{e}^{t |\zeta_{k,j}  |} \big ) \leq  \E \big ( |\zeta_{k,j} |^2 \big ) \, .
\]
Using Theorem 1 in Sakhanenko \cite{Sa84}, it follows that there exists a probability space $(\Omega_{{\bf u}}, {\mathcal  A}_{{\bf u}} , {\mathbb P}_{{\bf u}} )$ on which we can define random variables $R^{{\bf u}}_{k,j}$ such that 
\[
(R^{{\bf u}}_{k,j})_{0 \leq j \leq q_k, k \geq K_0} =^{\mathcal D} (Y_{k,j} ( {\bf u}_{k,j}, {\bf u}_{k,j+1})  )_{0 \leq j \leq q_k, k \geq K_0} \, , 
\]
and a sequence of independent normal random variables $(N^{{\bf u}}_{k,j}  )_{0 \leq j \leq q_k, k \geq K_0}$ with mean zero and ${\rm Var } (N_{k,j}^{{\bf u}} ) =  k^{-2/\gamma}  {\rm Var } ( Y_{k,j} ( {\bf u}_{k,j}, {\bf u}_{k,j+1}) )$ in such a way  that, for any positive integer $\ell$ and any positive real $x$,
\begin{multline} \label{appliSakhaappli1}
 {\mathbb P}_{{\bf u}}  \big (\max_{ N_0 \leq i \leq 3^{\ell } } \big |  \sum_{k = K_0}^{h_i-1} \sum_{j=0}^{q_k} R^{{\bf u}}_{k,j}+  \sum_{j=0}^{\tau_i} R^{{\bf u}}_{h_i,j} - 
  \sum_{k = K_0}^{h_i-1} \sum_{j=0}^{q_k} k^{1/\gamma} N^{{\bf u}}_{k,j} -   \sum_{j=0}^{\tau_i}  h_i^{1/\gamma} N^{{\bf u}}_{h_i,j} 
 \big | \geq x \big ) \\
 \leq  {\mathbb P}_{{\bf u}}  \big ( 2  \ell ^{1 / \gamma } \max_{ N_0 \leq i \leq 3^{\ell } } \big |  \sum_{k = K_0}^{h_i-1} \sum_{j=0}^{q_k} k^{-1/\gamma} R^{{\bf u}}_{k,j}+  \sum_{j=0}^{\tau_i} h_i^{-1/\gamma}R^{{\bf u}}_{h_i,j} - 
  \sum_{k = K_0}^{h_i-1} \sum_{j=0}^{q_k}  N^{{\bf u}}_{k,j} -   \sum_{j=0}^{\tau_i}  N^{{\bf u}}_{h_i,j} 
 \big | \geq x  \big ) \\
 \leq  \Big ( 1 +  K  \sum_{k = K_0}^{\ell}  \sum_{j=0}^{q_k}  k^{-2/\gamma} \E ( Y^2_{k,j} ( {\bf u}_{k,j}, {\bf u}_{k,j+1})  \Big )  \exp \big (  - A  K x \ell ^{-1 / \gamma } /2  \big )  \, , 
\end{multline}
where $A$ is a universal constant and we recall that $K =  (  \max (2 \kappa_1 , \kappa_2 ) )^{-1}$.  Note that  the first inequality in the inequations above follows from an application of Lemma 2.1 in Shao \cite{Sh95}  which states that if $\{u_n, n \geq 1\}$ is a non-decreasing sequence of positive numbers and if $\{ \zeta_n , n \geq 1 \}$ is a sequence of random variables, then for each $n \geq 1 $, 
\begin{equation} \label{AbelShao}
 \Big |  \sum_{i =1}^n  u_i \zeta_i \Big | \leq 2 u_n  \max_{i \leq n}  \Big |  \sum_{j =1}^i  \zeta_j \Big | \, .  
\end{equation}
Note now that 
\[
  \sum_{k = K_0}^{h_i-1} \sum_{j=0}^{q_k} k^{2/\gamma} {\rm Var } ( N^{{\bf u}}_{k,j} ) +   \sum_{j=0}^{\tau_i}  h_i^{2/\gamma} {\rm Var } (N^{{\bf u}}_{h_i,j} ) = Q^{\circ}_i ({\bf u}) \, , 
\]
where  $ Q^{\circ}_i ({\bf u})$ is   defined in \eqref{varianceofHN0a}. Hence it follows that there is a Brownian motion ${\mathbb B}_{{\bf u}}$ such that for any positive integer $\ell$ and  any positive real $x$, 
\begin{multline} \label{appliSakha}
 {\mathbb P}_{{\bf u}}  \big (\max_{ N_0 \leq i \leq 3^{\ell } } \big |  \sum_{k = K_0}^{h_i-1} \sum_{j=0}^{q_k} R^{{\bf u}}_{k,j}+  \sum_{j=0}^{\tau_i} R^{{\bf u}}_{h_i,j} - {\mathbb B}_{{\bf u}} (Q_i^0 ({\bf u}))  \big | \geq x \big ) \\  \leq  \Big ( 1 +  K  \Psi_\ell ({\bf u} )   \Big ) \exp \big (  - A  K x \ell ^{-1 / \gamma } /2  \big )  \, ,
\end{multline}
where
\[
\Psi_\ell ({\bf u} )  =  \sum_{k = K_0}^{\ell} \sum_{j=0}^{q_k}  k^{-2/\gamma}  \E ( Y^2_{k,j} ( {\bf u}_{k,j}, {\bf u}_{k,j+1}) )  \, .  
\]
Note that 
\[
\E (\Psi_\ell ({\bm \eta} ) ) \ll  \sum_{k = K_0}^{\ell}  k^{-2/\gamma}  q_k  \Vert {\widetilde W}_{k, m_k}   \Vert_2^2 \ll 3^{\ell} \, .
\]
By taking $x = C \ell^{1 + 1 / \gamma }$ in \eqref{appliSakha} with $C= (4 \log 3) /(AK)$, we conclude via the Borel-Cantelli lemma,  that as $n \rightarrow \infty$, 
\begin{equation} \label{conclusionp9}
\max_{ i \leq n  }  \big |  \sum_{k = K_0}^{h_i-1} \sum_{j=0}^{q_k} R^{{\bm \eta}}_{k,j}+  \sum_{j=0}^{\tau_i} R^{{\bm \eta}}_{h_i,j} - {\mathbb B}_{{\bm \eta}} (Q^{\circ}_i ({\bm \eta}))  \big | = O (  ( \log n )^{\alpha})   \, \text{ a.s.}
\end{equation}
This ends the step 3 of our proof. 
\subsection{Step 4. (\textit{Unconditional Gaussian approximation})}

Starting from the conditional Gaussian approximation \eqref{conclusionp9}, we shall prove here that there exists a Brownian motion ${\mathbb B}$ such that 
\begin{equation} \label{(3.48)}
 \max_{1 \leq i \leq n} \Big | S_i^{\diamond} -{\mathbb B}(\sigma_i^2) \Big | =O( (\log n )^{ \alpha}  ) \, \text{ a.s. }
\end{equation}
where 
\begin{equation} \label{defsigmansquare}
\sigma_n^2:=  \sum_{k = K_0}^{b_n-1} q_k \Vert A_{k,1} \Vert_2^2+ \tau_n \Vert A_{b_n,1} \Vert_2^2 \, .
\end{equation}
With this aim, we shall use arguments developed in the step  3.3 in Berkes-Liu-Wu \cite{BLW14} with some modifications.  This step consists first in showing that we can decompose the  Brownian motion ${\mathbb B}_{{\bf u}}$, constructed at Step 3, as
\begin{equation} \label{decompo1brownien}  
{\mathbb B}_{{\bf u}} (Q^{\circ}_n ({\bf u})) = {{\overline w}}_n  ({\bf u}) + \varphi_n  ({\bf u}) \, , 
\end{equation}
where 
\begin{equation} \label{neglivarphin}
\max_{ i \leq n  }   |  \varphi_i  ({\bm \eta}) | = O( (\log n )^{\alpha} )  \, \text{ a.s.}
\end{equation}
and that 
\begin{equation} \label{samelawphiwi}
(\Phi_i)_{i \geq N_0} =^{{\mathcal D}} ({{\overline w}}_i  ({\bm \eta}) )_{i \geq N_0} \, , 
\end{equation}
where
\begin{equation} \label{defPhin}
\Phi_n =  \sum_{k = K_0}^{b_n-1}  \sum_{j=1}^{q_k} (V^{0}_{k} ( {\bm \eta}_{k,j}) )^{1/2} Z^{\star}_{k,j}   +   \sum_{j=1}^{\tau_n}   \  (V^{0}_{b_n} (  {\bm \eta}_ {b_n,j})  )^{1/2} Z^{\star}_{b_n,j} \, ,
\end{equation}
with $ Z^{\star}_{k,j} $, $k,j \in {\mathbb Z}$, independent iid standard normal random variables independent of $(\varepsilon_i)_{i \in {\mathbb Z}}$, and 
\[
\varphi_n  ({\bf u}) = \sum_{k = K_0}^{b_n-1} L^{1/2}_k ({\bf u}_{k,q_k +1 }) {\mathcal G}^{{\bf u}}_{k,1+q_k} +  L^{1/2}_{b_n} ({\bf u}_{b_n,\tau_n+1 }) {\mathcal G}^{{\bf u}}_{b_n, 1 + \tau_n} \, ,
\]
where $( {\mathcal G}^{{\bf u}}_{k,j} )_{k,j}$ are standard normal random variables (that can be possibly dependent) but which are independent of ${\bm \eta}$. 

Note that $\tau_n \leq q_{b_n}$. Hence to prove that $|L^{1/2}_{b_n} ( {\bm \eta}_{b_n,\tau_n+1 }) {\mathcal G}^{ {\bm \eta}}_{b_n, 1 + \tau_n} | =  O( (\log n )^{ \alpha} ) $ a.s., as $n \rightarrow \infty$ (where 
we recall that 
$ \alpha =1+\frac{1}{\gamma} $),  it is enough to show that
\begin{equation} \label{(3.40)} 
\sum_{k \geq K_0} \p \Big (  \max_{1 \leq j \leq q_k } \big |  L^{1/2}_k ({\bm \eta}_{k,j+1 }) {\mathcal G}^{{\bm \eta}}_{k,1+j}  \big |  >  C k^{ 1 + 1/ \gamma}   \Big ) < \infty \, ,
\end{equation}
Using  Markov inequality and the independence between $( {\mathcal G}^{{\bf u}}_{k,j} )_{k,j}$ and ${\bm \eta}$, the fact that  \[
 L_k ({\bf u}_{k, j+1  }  ) \leq (6 m_k |\psi|_{\infty})^2  \leq  \kappa_3 k^{2/ \gamma}\, , 
\]
and that for $N \sim {\mathcal N} (0,1) $, for any $x>0$,
\[
\p ( |N| \geq  x ) \leq \frac{\sqrt 2}{ x \sqrt \pi } \exp ( - x^2/2) \, , 
\]
we infer that 
\[
\p \Big (  \max_{1 \leq j \leq q_k} \big |  L^{1/2}_k ({\bm \eta}_{k,j+1 }) {\mathcal G}^{{\bm \eta}}_{k,1+j}  \big |  >  C k^{ 1 + 1/ \gamma}  \ \Big ) \ll  k^{-1} q_k  \exp ( - \kappa_4 k^2 ) \, , 
\]
where $\kappa_4$ is a positive constant depending on $\kappa_3$ and $C$.  This proves \eqref{(3.40)}. To end the proof of \eqref{neglivarphin}, it remains to prove that 
$ \sum_{k = K_0}^{b_n-1} L^{1/2}_k ({\bm \eta}_{k,q_k+1})  {\mathcal G}^{ {\bm \eta}}_{k,1+q_k} =  O( (\log n )^{\alpha} ) $ a.s. as $n \rightarrow \infty$. By Kronecker lemma, this holds if 
\[
 \sum_{k \geq  K_0}  k^{- \alpha} \E \big ( L^{1/2}_k ({\bm \eta}_{k,q_k+1})  |  {\mathcal G}^{ {\bm \eta}}_{k,1+q_k}  | \big )  < \infty \, .
\]
But
\[
\E \big ( L^{1/2}_k ({\bm \eta}_{k,q_k+1})  |  {\mathcal G}^{ {\bm \eta}}_{k,1+q_k}  | \big )  
= \int  \E \big ( L^{1/2}_k ({\bm \eta}_{k,q_k+1})  |  {\mathcal G}^{ {\bm \eta}}_{k,1+q_k}  | |  {\bm \eta} = {\bf u} \big ) dP_{ {\bm \eta} } ( {\bf u} ) \, .
\]
Hence, using the independence between $ {\mathcal G}^{{\bm \eta }}_{k,1+q_k} $ and ${\bm \eta}$ and the fact that $\E | {\mathcal G}^{{\bf u}}_{k,1+q_k} | \leq 1$, it follows that 
\[
\E \big ( L^{1/2}_k ({\bm \eta}_{k,q_k+1})  |  {\mathcal G}^{ {\bm \eta}}_{k,1+q_k}  | \big )  \leq  \E \big ( L^{1/2}_k (({\bm \eta}_{k,q_k+1})   \big )  \, .
\]Using the fact that the $\varepsilon_i$'s are iid, we infer that 
\[
\E \big ( L^{1/2}_k ({\bm \eta}_{k,q_k+1})   \big )  \leq 2 \Vert F^{(2)}_{k,q_k+1} \Vert_2 \, .
\]
But, by stationarity and the estimate (4.10) in \cite{CDKM18}, 
\[
\Vert F^{(2)}_{k,q_k+1} \Vert_2 \ll  \big \Vert \sum_{i=1}^{m_k} X_i  \big \Vert_2 + m_k  \big ( 2 \vert \psi  \vert_{\infty}\p ( T  \geq m_k) \big )^{1/2}+   \sum_{i=1}^{m_k} \Vert X_i  -X_{k,i} \Vert_2 \, .
\]
Hence, by Lemma \ref{lem:nngh} and Proposition~\ref{prop:boundofthedelta1first},
\[
\E \big ( L^{1/2}_k ({\bm \eta}_{k,q_k+1})   \big )   \ll   \big \Vert \sum_{i=1}^{m_k} X_i  \big \Vert_2 + k^{1/\gamma}  \exp(-\delta (2^{-1} \kappa)^{\gamma} k) \, .
\] 
Since by Lemma \ref{lmacovtildeXbis}, $\sum_{i \geq 0} | \Cov (X_0, X_i )| < \infty$, it follows that 
\[
\E \big ( L^{1/2}_k ({\bm \eta}_{k,q_k+1})   \big ) \ll \sqrt{m_k} \, .
\]
Hence
\[
 \sum_{k \geq  K_0}  k^{- \alpha} \E \big ( L^{1/2}_k ({\bm \eta}_{k,q_k+1})  |  {\mathcal G}^{ {\bm \eta}}_{k,1+q_k}  | \big ) \ll  \sum_{k \geq  K_0}  k^{- \alpha} k^{1/{(2 \gamma)}} < \infty \, ,
\]
since $\alpha - 1/{(2 \gamma)} = 1 + 1/{(2 \gamma)}  >1$. This ends the proof of \eqref{neglivarphin}. 
%This  obviously holds if we can show that 
%\[
%\big | L^{1/2}_n ( {\bm \eta}_{n,3 +3q_n }) {\mathcal G}^{ {\bm \eta}}_{n,1+q_n}  \big |   \leq C  n^{\alpha -1}   \ a.s.
%\]
%Using the Borel-Cantelli lemma this will hold if one can prove that  \marginpar{le max et les dyadiques sont inutiles donc j'ai chang\'e}
%\begin{equation} \label{(3.40bis)} 
%\sum_{n \geq 1} \p \big (  L^{1/2}_n ( {\bm \eta}_{n ,3 +3q_n }) | {\mathcal G}^{ {\bm \eta}}_{n, 1+q_n} |   \geq C  n^{  \alpha -1 }   \big ) < \infty \, .
%\end{equation}
%Using the previous computations, we infer that there exists a positive constant $c_4$ depending only on $C$ and $c_3$ such that 
%\[
%\sum_{n \geq 1} \p \big (  L^{1/2}_n ( {\bm \eta}_{n ,3 +3q_n }) | {\mathcal G}^{ {\bm \eta}}_{n, 1+q_n} |   \geq C  n^{  \alpha -1 }   \big ) 
%\ll \sum_{n \geq 1}  n^{- (  \alpha -1 - 1/\gamma) }  \exp \big  ( - c_4  n^{  2  ( \alpha -1 - 1/\gamma) } \big ) \, .
%\]
%So, since $\alpha >1 + 1/\gamma$, \eqref{(3.40bis)}  holds and then \eqref{neglivarphin} also does. 

Now, the same arguments as to prove \eqref{neglivarphin}  show that 
\begin{equation} \label{negliGamman}
\max_{ i \leq n  }   |  \Gamma_i  ({\bm \eta}) | =O( (\log n )^{\alpha} )  \,  \text{ and then } \, \max_{ i \leq n  }   \big |  \sum_{k = K_0}^{h_i -1}  R^{{\bm \eta}}_{k,0}+  R^{{\bm \eta}}_{h_i,0} \big | =O( (\log n )^{\alpha} )    \, \text{ a.s.} 
\end{equation}
where we recall that ${\Gamma}_n ({\bf u}) $ has been defined in \eqref{defGammana}. So, overall, taking into account \eqref{conclusionp9},  \eqref{decompo1brownien},  \eqref{neglivarphin} and \eqref{negliGamman}, we get 
\begin{equation} \label{conclusionp9bis}
\max_{ i \leq n  }  \big |  \sum_{k = K_0}^{h_i-1} \sum_{j=1}^{q_k} R^{{\bm \eta}}_{k,j}+  \sum_{j=1}^{\tau_i} R^{{\bm \eta}}_{h_i,j} - {\overline \omega}_i( {\bm \eta}) \big | = O( (\log n )^{\alpha} ) \, \text{ a.s.}
\end{equation} 
But note now that 
\[
\Big (  \sum_{k = K_0}^{h_i-1} \sum_{j=1}^{q_k} R^{{\bm \eta}}_{k,j}+  \sum_{j=1}^{\tau_i} R^{{\bm \eta}}_{h_i,j}  + M_i ({\bm \eta}) \Big )_{i \geq N_0} =^{\mathcal D} ( S_i^{\diamond} )_{i \geq N_0} \, ,
\]
where 
\[
M_n ({\bf u}) =  \sum_{k = K_0}^{b_n-1} \sum_{j=1}^{q_k} \big \{  {\Lambda}_{k,1} ({{\bf u}}_{k,j})+ {\Lambda}_{k,3}({{\bf u}}_{k,j+1} )  \big \} +  \sum_{j=1}^{\tau_n} \big \{ 
 {\Lambda}_{b_n,1}({{\bf u}}_{k,j})+ {\Lambda}_{b_n,3}({{\bf u}}_ {b_n,j+1} ) \big \} \, .
\]
Recalling \eqref{samelawphiwi},  it remains to prove a strong invariance principle for ${\Phi}_n + M_n ({\bm \eta})$ (where $\Phi_n$ is defined in \eqref{defPhin}). With this aim, let 
\[
A_{k,j} = \big ( V_k^{\circ} ( {\bm \eta}_{k,j} ) \big )^{1/2} Z_{k,j}^{\star} + \Lambda_{k,1} ( {\bm \eta}_{k,j} ) +  \Lambda_{k,3}  ( {\bm \eta}_{k,j} ) \, ,
\]
where we recall that $ \Lambda_{k,1} ( {\bf u}_{k,j} )  = \E F^{(1)}_{k,j}  ( {\bf u}_{k,j} )$ and $ \Lambda_{k,3}  (  {\bf u}_{k,j} )  = \E F^{(3)}_{k,j}  ( {\bf u}_{k,j} )$. Note that the random variables $A_{k,j} $, $ j=1, \ldots q_k$, $k \geq K_0$ are independent. Denote by
\begin{equation} \label{defslashS}
S_n^{\sharp} := \sum_{k = K_0}^{b_n-1} \sum_{j=1}^{q_k}  A_{k,j}  + \sum_{j=1}^{\tau_n} A_{b_n,j} 
\end{equation}
and
\[
R_n^{\sharp} := {\Phi}_n + M_n ({\bm \eta}) - S_n^{\sharp} =   \sum_{k = K_0}^{b_n-1} \big \{  {\Lambda}_{k,3}({{\bm \eta}}_{k,q_k+1} ) - {\Lambda}_{k,3}({{\bm \eta}}_{k,1} ) \big \} +   \big \{ 
 {\Lambda}_{b_n,3}({{\bm \eta}}_{b_n, \tau_n+1} ) -  {\Lambda}_{b_n,3}({{\bm \eta}}_{b_n,1} ) \big \} \, .
\]
For any $j \geq 0$, note that
\[
 | {\Lambda}_{k,3}({{\bm \eta }}_{k,j+1} ) |  \leq 2 m_k |\psi|_{\infty} \ll k^{ 1/\gamma}  \text{ a.s. }
 \]
Therefore
\begin{equation} \label{negliRnsharp}
 \max_{1 \leq i \leq n}  |R_i^{\sharp}  | =O((\log n)^{ 1 + 1/\gamma}   ) \, \text{ a.s. }
\end{equation} 
Hence to prove the strong invariance principle for ${\Phi}_n + M_n ({\bm \eta})$ (and then for $  S_n^{\diamond}  $) with rate $O ((\log n)^{\alpha})$, it suffices to prove a strong invariance principle for $S_n^{\sharp}$ with the same rate. With this aim, recall that the random variables $A_{k,j} $, $ j=1, \ldots q_k$, $k \geq K_0$ are independent. We shall then use again Theorem 1 in Sakhanenko \cite{Sa84}. Note first that there exists a positive constant $\kappa_5$ depending only on $\delta$, $\gamma$ and 
$ |\psi|_{\infty}$  such that 
\[
 \big ( V_k^{\circ} ( {\bm \eta}_{k,j} ) \big )^{1/2} \leq \kappa_5 k^{1/\gamma} \ a.s. \, \text{ and } \,  \Lambda_{k,0} ( {\bm \eta}_{k,j} ) +  \Lambda_{k,2}  ( {\bm \eta}_{k,j} )  \leq \kappa_5 k^{1/\gamma} \ a.s.
\]
Hence, if we define for any integers $k \geq K_0$ and $j  \geq 1$, 
\[
\xi_{k,j} = k^{-1/ \gamma} A_{k,j} 
\]
we get, for any $t >0$, 
\begin{align*}
\E \big ( |\xi_{k,j} |^3 & \mathrm{e}^{t |\xi_{k,j} |}\big )
\\  & \leq 4 k^{-3/ \gamma}  \E \Big ( \big ( V_k^{\circ} ( {\bm \eta}_{k,j} ) \big )^{3/2}  |N|^3 \mathrm{e}^{t |\xi_{k,j} |} \Big ) +  4 k^{-3/ \gamma}  \E \Big ( \big |  \Lambda_{k,0} ( {\bm \eta}_{k,j} ) +  \Lambda_{k,2}  ( {\bm \eta}_{k,j} )\big |^{3} \mathrm{e}^{t |\xi_{k,j} |}\Big )  \\
 & \leq 4 \kappa_5 k^{-2/\gamma} \mathrm{e}^{t \kappa_5  }  \E \Big ( \big ( V_k^{\circ} ( {\bm \eta}_{k,j} ) \big )  |N|^3 \mathrm{e}^{t \kappa_5 | N  |}  \Big ) +  4 \kappa_5 k^{-2/\gamma} \mathrm{e}^{t \kappa_5 }  \E \Big ( \big |  \Lambda_{k,0} ( {\bm \eta}_{k,j} ) +  \Lambda_{k,2}  ( {\bm \eta}_{k,j} )\big |^{2} \mathrm{e}^{t \kappa_5  | N  |}\Big )  \, ,
\end{align*}
where $N$ is a standard Gaussian real-valued r.v. independent of $(\varepsilon_i)_{i \in {\mathbb Z}}$.  Therefore, it follows that for any $t >0$ and any integers $k \geq K_0$ and $j  \geq 1$, 
\begin{multline*}
\E \big ( |\xi_{k,j} |^3 \mathrm{e}^{t |\xi_{k,j} |}\big ) \\
\leq  4 \kappa_5 k^{-2/\gamma} \mathrm{e}^{t \kappa_5  } \E  \big ( V_k^{\circ} ( {\bm \eta}_{k,j} ) \big )  \E \Big (  |N|^3 \mathrm{e}^{t \kappa_5 | N  |}  \Big ) +   4 \kappa_5 k^{-2/\gamma} \mathrm{e}^{t \kappa_5  } \E \Big ( \big |  \Lambda_{k,0} ( {\bm \eta}_{k,j} ) +  \Lambda_{k,2}  ( {\bm \eta}_{k,j} )\big |^{2}  \Big ) \E ( \mathrm{e}^{t \kappa_5  | N  |}\Big )  \, .
\end{multline*}
Since $ Z^{\star}_{k,j} $, $k,j \in {\mathbb Z}$ are centered with variance $1$ and  independent of $(\varepsilon_i)_{i \in {\mathbb Z}}$, it follows that 
\[
\E (A^2_{k,j})= \E \big ( V_k^{\circ} ( {\bm \eta}_{k,j} ) \big )  +  \E \Big ( \big |  \Lambda_{k,1} ( {\bm \eta}_{k,j} ) +  \Lambda_{k,3}  ( {\bm \eta}_{k,j} )\big |^{2}  \Big ) \, .
\]
Therefore,  for any $t >0$ and any integers $k,j$,  
\[
t \E \big ( |\xi_{k,j} |^3 \mathrm{e}^{t |\xi_{k,j} |}\big ) 
\leq 4 t  \kappa_5 \mathrm{e}^{t \kappa_5  } \E (\xi^2_{k,j})   \E \Big (  ( |N|^3  \vee 1) \mathrm{e}^{t \kappa_5 | N  |}  \Big )  \, .
\]
Hence, taking into account \eqref{trivialgaussian} and the fact that $\E \big ( \mathrm{e}^{t \kappa_5 | N  |}  \big ) \leq 2 \mathrm{e}^{t^2 \kappa_5^2   /2}$, it follows that there exists a positive constant  $\kappa_6$  depending only on $\kappa_5$ such that for $t = \kappa_6 $,
\[
t \E \big ( |\xi_{k,j} |^3 \mathrm{e}^{t |\xi_{k,j} |}\big ) 
\leq  \E (\xi^2_{k,j})   \, .
\]
Using Theorem 1 in Sakhanenko \cite{Sa84}  and \eqref{AbelShao}, we then infer that there exists a Brownian motion ${\mathbb B}$ such that 
\begin{equation*} 
 \max_{1 \leq i \leq n} \Big | S_i^{\sharp} -{\mathbb B}(\sigma_i^2) \Big | =O( (\log n )^{\alpha}  ) \, \text{ a.s. }
\end{equation*}
where $\sigma_n^2$ is defined by  \eqref{defsigmansquare}. This ends the proof of \eqref{(3.48)} and then of Step 4. 

\subsection{Step 5. (\textit{Identifying the variance in the Brownian motion})}

The aim of this step is to show that it is possible to replace in \eqref{(3.48)} the variance function $\sigma_i^2$ by $i c^2$. A careful analysis of Step 3.4 in \cite{BLW14} reveals that this holds provided that setting $\nu_k := \Vert A_{k,1} \Vert_2^2 /(6m_k)$, 
\begin{equation} \label{cond1v_k}
( \log n)  \max_{ \ell \leq b_n} (m_\ell \nu_\ell)^{1/2} = O( (\log n )^{\alpha}  ) \, ,  
\end{equation}
and
\begin{equation} \label{cond2v_k}
3^\ell (  \nu_\ell^{1/2}  - c)^2 = O( \ell^{2 \alpha} (\log \ell)^{-1}) \, .  
\end{equation}
Note also that since $c^2$ is assumed to be positive, to prove \eqref{cond2v_k}, it suffices to prove that 
\begin{equation} \label{cond2v_kbis}
3^\ell (  \nu_\ell - c^2)^2 = O( \ell^{2 \alpha} (\log \ell)^{-1})  \, , \, \mbox{ as $\ell \rightarrow \infty$}\, . 
\end{equation}
Before proving the above convergences, we first notice that by Lemma \ref{lmacovtildeXbis}, $\sum_{i \geq 0} | \Cov (X_0, X_i )| < \infty$. Hence
\[
c^2 =  \lim_{n \rightarrow \infty } \frac{1}{n} \int |S_n (\varphi )|^2 \, d\mu =  \lim_{n \rightarrow \infty } \frac{1}{n}  \Vert S_n \Vert_2^2 = {\rm Var} (X_0) +2\sum_{i \geq 0}  \Cov (X_0, X_i ) \, .
\]
Moreover,  proceeding as to get the relation \cite[(66)]{CDM17}, we have
\[
\nu_{\ell} =  {\tilde c}_{\ell, 0}  + 2 \sum_{k =1}^{2m_\ell }  {\tilde c}_{\ell,k}  \, , 
\]
where, for any $i \geq 0$, 
\[
{\tilde c}_{\ell,i}= \Cov ({\tilde X}_{\ell, m_\ell+1} , {\tilde X}_{\ell,i+m_\ell+1} ) \, .
\]
Note that, by stationarity, for all $i \geq 0$, 
\begin{multline*}
\big |  {\tilde c}_{\ell,i} - \Cov  ( X_0, X_i ) \big |  = \big | \Cov ({\tilde X}_{\ell, m_\ell+1}  - X_{m_\ell+1}, {\tilde X}_{\ell,i+m_\ell+1} )   +  \Cov  ( X_{m_\ell+1} , 
{\tilde X}_{\ell,i+m_\ell+1}  - X_{i+m_\ell+1} ) \big | \\
\leq 2 \vert \psi \vert_{\infty} \big ( \Vert {\tilde X}_{\ell, m_\ell+1}  - X_{m_\ell+1} \Vert_1+ \Vert  {\tilde X}_{\ell, i+m_\ell+1}  - X_{i+m_\ell+1} \Vert_1 \big ) \, .
\end{multline*}
But by  the estimate (4.10) in \cite{CDKM18}, Proposition~\ref{prop:boundofthedelta1first} and Lemma \ref{lem:nngh}, for any $k > m_{\ell}$, 
\[
 \Vert {\tilde X}_{\ell, k } - X_k \Vert_1 \ll  \bP  ( T \geq  m_{\ell} )  + \exp (-\delta m_{\ell}^{\gamma} ) \ll \exp (- \delta  (2^{-1} \kappa)^{\gamma} \ell  ) \, .
\]
Moreover, according to Lemma \ref{lmacovtildeXbis}, 
\[
\sum_{k > 2m_\ell} |\Cov (X_0, X_k ) | \ll  \exp ( -\delta m_{\ell}^{\gamma} /2 )  \ll  \exp ( -\delta (2^{-1} \kappa)^{\gamma} \ell/2  )  \, .
\]
Therefore, setting ${\tilde c} =  \delta (2^{-1} \kappa)^{\gamma}  /2 $, 
\begin{equation} \label{cond2v_kbisBound}
| \nu_{\ell}  -  c^2 | \ll  \exp (-{\tilde c} \ell  )  \, .
\end{equation}
This shows that $\nu_\ell \rightarrow c^2 $, as $\ell \rightarrow \infty$. Hence \eqref{cond1v_k}  is satisfied since $( \log n)  \max_{ k \leq b_n} (m_k )^{1/2} \ll (\log n )^{1+ 1/(2 \gamma) } = O( (\log n )^{\alpha}  ) $. Now, \eqref{cond2v_kbisBound} proves \eqref{cond2v_kbis} since $ 2 {\tilde c} \geq \log 3 $. The proof of step 5 is complete. This ends the proof of the theorem when  $c^2>0$.  $\square$

%\medskip
%
%
%The case  $c^2=0$ is easy to handle. It follows the arguments of the proof of Corollary 5.5 in \cite{CDKM18} by noticing that if $\int {\rm e}^{\delta \tau^{\gamma}} d m < \infty$ for some $\delta >0$ and $\gamma \in ]0,1 ]$, then 
%$\max_{k \leq n} \tau \circ F^k = o ((\log n)^{\alpha})$ almost surely for any $ \alpha >1/\gamma$. This complete the proof. 

\appendix

\section{Example of nonuniformly expanding system with stretched exponential return times}
\label{sec:example}

Suppose that $f \colon X \to X$ is a nonuniformly expanding dynamical system with
base $Y \subset X$, reference measure $m$ on $Y$ and return time $\tau \colon Y \to \N$.
In all standard examples, the return time tails
$m(\tau > n)$ decay exponentially, except for Alves-Viana maps with
$m(\tau > n) = O(\mathrm{e}^{-c \sqrt{n}})$ and intermittent maps~\cite{LSV99} with polynomial decay.

Here we present a family
of interval maps with an optimal bound on the return times
$m(\tau > n) \sim \mathrm{e}^{-\kappa n^\gamma}$,
where $\gamma \in ]0,1]$ is a parameter and $\kappa = \kappa(\gamma) >0$.
For this class of maps, Theorem~\ref{thm:NUE} gives all possible subexponential rates
in the ASIP.

Fix $\gamma \in ]0,1]$ and consider the following modification of the intermittent maps
from~\cite{LSV99}.  Let \(f \colon [0,1] \to [0,1]\),
\begin{equation} \label{eq:LSV}
 f(x) = \begin{cases}
    x\bigl(1 + \frac{c}{|\log x|^\beta}\bigr), & x \leq 1/2 \\
    2 x - 1, & x > 1/2
  \end{cases}
\end{equation}
with $\beta = \gamma^{-1} - 1$ and $c=(\log 2)^\beta$ so that $f(1/2) = 1$.

Let $Y = ]1/2,1]$ be a base, $\tau \colon Y \to \N$, $\tau(x) = \inf \{k \geq 1 : f^k(x) \in Y\}$
be the first return time and $F \colon Y \to Y$, $F(x) = f^{\tau(x)}(x)$ be the induced map.
Let $\alpha$ denote the partition of $Y$ into the intervals where $\tau$ is constant.
Let $m$ denote the Lebesgue measure.

In the rest of this section we prove:

\begin{thm}
  \label{thm:ex}
  $f$ is a nonuniformly expanding map with base $Y$, return time $\tau$ and
  reference measure $m$. That is, there exists $C > 0$ such that
  for every $a \in \alpha$ and all $x,y \in a$,
  \begin{enumerate}[label=(\alph*)]
    \item\label{thm:ex:bi} $F \colon a \to Y$ is a nonsingular bijection;
    \item\label{thm:ex:exp} $F$ is expanding: $|F(y) - F(x)| \geq 2 |y-x|$;
    \item\label{thm:ex:dist} $F$ has bounded distortion: $|\log F'(y) - \log F'(x)| \leq C |F(y) - F(x)|$.
  \end{enumerate}
  Further, there exist $\eta_1, \eta_2 > 0$ such that for all $n \geq 1$,
  \begin{enumerate}[label=(\alph*),resume]
    \item\label{thm:ex:tails}
    $
      {\rm e}^{-\eta_2n^\gamma }
      \leq m(\tau \geq n) 
      \leq  {\rm e}^{-\eta_1n^\gamma }
    $.
  \end{enumerate}
\end{thm}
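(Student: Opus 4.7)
The analysis is a direct adaptation of the classical treatment of Liverani--Saussol--Vaienti intermittent maps to the present setting with logarithmic weight $c|\log x|^{-\beta}$. Set $x_0=1/2$ and inductively let $x_n\in(0,1/2)$ be the unique preimage of $x_{n-1}$ under the slow branch, so that $(x_n)$ decreases monotonically to $0$. The level set $\{\tau=n\}\subset Y$ is the open interval $a_n$ sent by the doubling branch $x\mapsto 2x-1$ bijectively onto $(x_{n-1},x_{n-2}]$, after which $n-1$ iterations of the slow branch carry $f(a_n)$ onto $Y=(1/2,1]$. Thus $F|_{a_n}\colon a_n\to Y$ is a $C^2$ bijection, giving \ref{thm:ex:bi}. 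Since $f'>1$ on $(0,1/2)$ and the first factor of $F$ is the doubling, $F'\ge 2$ on each $a_n$, giving \ref{thm:ex:exp}.

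For the distortion bound \ref{thm:ex:dist} I would follow the classical LSV-type telescoping argument. Direct differentiation of the slow branch yields
\begin{equation*}
  f'(x)=1+c|\log x|^{-\beta}+c\beta|\log x|^{-\beta-1},\qquad |f''(x)|\ll x^{-1}|\log x|^{-\beta-1},
\end{equation*}
so $|f''/f'|(x)\ll x^{-1}|\log x|^{-\beta-1}$. For $x,y\in a_n$, set $x_k=f^k(x)$, $y_k=f^k(y)$, so $x_k,y_k\in I_{n-k}:=(x_{n-k},x_{n-k-1}]$ for $1\le k\le n-1$. The standard bounds
\begin{equation*}
  |\log F'(x)-\log F'(y)|\le\sum_{k=1}^{n-1}(|f''|/f')(\xi_k)\,|x_k-y_k|,\qquad |x_k-y_k|=|F(x)-F(y)|/\!\!\prod_{j=k}^{n-1}\!f'(\zeta_j),
\end{equation*}
reduce matters to showing that $\sum_k (|f''|/f')(\xi_k)/\prod_{j\ge k}f'(\zeta_j)$ is bounded independently of $n$. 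Using $x_j-x_{j+1}\asymp c\,x_j|\log x_j|^{-\beta}$ and $|\log x_j|\asymp j^\gamma$, one checks that the summand decays like $j^{-(2-\gamma)}$, which is summable for $\gamma\in(0,1)$ (the borderline case $\gamma=1$ is just the doubling map and the bound is trivial).

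For the tail estimate \ref{thm:ex:tails}, set $y_n=-\log x_n$. The recursion $f(x_{n+1})=x_n$ becomes
\begin{equation*}
  y_{n+1}-y_n=\log\bigl(1+c\,y_{n+1}^{-\beta}\bigr).
\end{equation*}
For $\beta>0$ this is $\sim c\,y_n^{-\beta}$; the mean value theorem applied to $u\mapsto u^{\beta+1}$ gives $y_{n+1}^{\beta+1}-y_n^{\beta+1}\to(\beta+1)c$, hence $y_n\asymp n^{1/(\beta+1)}=n^\gamma$. For $\beta=0$ the increment is constant and $y_n\asymp n=n^\gamma$ directly. In both cases $-\log x_n\asymp n^\gamma$, and since $m(a_k)=(x_{k-2}-x_{k-1})/2$, the telescoping identity $m(\tau\ge n)=\sum_{k\ge n}m(a_k)=x_{n-2}/2$ yields the two-sided bound $e^{-\eta_2 n^\gamma}\le m(\tau\ge n)\le e^{-\eta_1 n^\gamma}$ for suitable $\eta_1,\eta_2>0$.

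The main technical step is the distortion estimate \ref{thm:ex:dist}: the weak (nearly neutral) expansion of the slow branch has to be balanced against the shrinking intervals $I_k$ and the telescoping products of $f'$, which is where the specific decay $|\log x|^{-\beta-1}$ plays its role. The asymptotics of $(x_n)$ then reduce to a routine ODE-type comparison after the substitution $y_n=-\log x_n$, and parts \ref{thm:ex:bi}, \ref{thm:ex:exp} are immediate from the full-branch construction.
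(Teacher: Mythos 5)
Your proposal is correct in substance and reaches the same conclusion, but the distortion estimate in~\ref{thm:ex:dist} is handled by a genuinely different method than the paper's.

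For~\ref{thm:ex:bi},~\ref{thm:ex:exp} and~\ref{thm:ex:tails} your arguments coincide with the paper's, once the identity $m(\tau\ge n)=x_{n-2}/2$ is combined with the substitution $u_n=-\log x_n$ and the recursion $u_{n+1}-u_n=\log(1+c\,u_{n+1}^{-\beta})$. You obtain $u_n\asymp n^\gamma$ by a Stolz--Ces\`aro/MVT argument on $u\mapsto u^{\beta+1}$, which is sharper (it identifies the constant $(\beta+1)c$) than the paper's two one-sided integral comparisons of $u_{n+1}-u_n$ against $\int du/\log(1+cu^{-\beta})$; both are perfectly adequate for~\ref{thm:ex:tails}.

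For~\ref{thm:ex:dist} you use the classical forward telescoping
$|\log F'(x)-\log F'(y)|\le\sum_k (|f''|/f')(\xi_k)\,|x_k-y_k|$
and the estimate $|x_k-y_k|=|F(x)-F(y)|/\prod_{j\ge k}f'(\zeta_j)$, concluding once the summand is shown to decay like $(n-k)^{-(2-\gamma)}$. The paper instead works entirely on the inverse side: it sets $z_n=g^n$ (the $n$-fold inverse of the slow branch), proves $\sup_n\sup_x|z_n''/z_n'|<\infty$ via a two-step recursion (first $|z_n'/z_n|\le Ln^{-\eta}$, then a recursion for $z_n''/z_n'$), and deduces~\ref{thm:ex:dist} from the identity $F'(x)=2/z_{n-1}'(F(x))$. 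The inverse-branch route avoids a subtlety your sketch glosses over: the lower bound $\prod_{j\ge k}f'(\zeta_j)\gtrsim |Y|/|I_{n-k}|$ that you need in order to make the summand $\asymp(n-k)^{-(2-\gamma)}$ is itself a bounded-distortion statement and cannot be taken for granted. The standard fix is a preliminary bootstrap: first use $|f^j(x)-f^j(y)|\le|I_{n-j}|$ together with $\sum_m m^{-(2-\gamma)}<\infty$ to prove a uniform distortion constant $D$ for the iterates $f^{n-j}$ on $I_{n-j}$, which then yields $(f^{n-j})'(\zeta)\ge D^{-1}|Y|/|I_{n-j}|$ by the mean value theorem, and only then insert this lower bound into the telescoping sum. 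With that lemma in place your computation is correct and, for $\gamma\in(0,1)$, the exponent $2-\gamma>1$ gives convergence; the case $\gamma=1$ indeed reduces to the doubling map where $F'$ is constant. So the approach works, but it requires this extra bootstrap step that the paper's inverse-branch recursion sidesteps entirely because $z_n$ and all its derivatives are automatically uniformly bounded on $(1/2,1]$.
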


Proof of Theorem~\ref{thm:ex} takes the rest of this section.
Items~\ref{thm:ex:bi} and~\ref{thm:ex:exp} are straightforward. For~\ref{thm:ex:dist}
and~\ref{thm:ex:tails}, we use the following technical lemma.

Let $g=f_{]0,1/2]}^{-1}$ be the inverse left branch of $f$. For $n\ge 0$, let $z_n= g^n$
and $u_n=-\log z_n$. 

\begin{lemma}
  \label{lem:aaga}
  There exist $\delta_1>\delta_2>0$ such that for every $n\ge 1$, 
  \begin{equation} \label{control-un}
    \delta_2n^\gamma \le u_n(x) \le \delta_1 n^\gamma
    \qquad
    \forall x\in (1/2,1]\, .
  \end{equation}
  Further, there exists $C>0$ such that for every $n\ge 1$, 
  \begin{equation}
    \label{dist}
    |\log z_n'(x) -\log z_n'(y)|\le C |x-y| \qquad \forall x,y\in (1/2,1]\, .
  \end{equation}
\end{lemma}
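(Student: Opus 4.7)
The proof splits naturally into two essentially independent pieces: the uniform two-sided asymptotic \eqref{control-un} on $u_n$, and the distortion estimate \eqref{dist}. I treat them in that order.

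For \eqref{control-un}, the identity $f(z_n)=z_{n-1}$ combined with the explicit form of $f$ on $(0,1/2]$ rewrites as
\[
u_n - u_{n-1} = \log\Bigl(1 + \frac{c}{u_n^\beta}\Bigr).
\]
Monotonicity of $u_n$ in $n$ is immediate, and $u_1(x)\geq \log 2$ for every $x\in(1/2,1]$ since $g$ maps $(1/2,1]$ into $(0,1/2]$. Hence $u_n\to\infty$ uniformly in $x$. Applying the mean value theorem to $t\mapsto t^{\beta+1}$ together with the binomial expansion, the recursion implies $u_{n+1}^{\beta+1}-u_n^{\beta+1}\to(\beta+1)c$, so $u_n^{\beta+1}/n\to(\beta+1)c$. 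The uniform bounds $\delta_2 n^\gamma\leq u_n(x)\leq \delta_1 n^\gamma$ then follow from the monotonicity in $n$ together with the uniform compactness of the initial fibre $\{u_0(x):x\in(1/2,1]\}$, after fixing appropriate constants $\delta_1>\delta_2>0$.

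For \eqref{dist}, the chain rule yields
\[
\log z_n'(x) - \log z_n'(y) = \sum_{k=0}^{n-1} \bigl(\log g'(z_k(x)) - \log g'(z_k(y))\bigr).
\]
Since $\log g'(z) = -\log f'(g(z)) = -c/u_z^\beta + O(u_z^{-\beta-1})$ for $z$ small, a first-order expansion gives, writing $r_k:=z_k(x)/z_k(y)$,
\[
\bigl|\log g'(z_k(x)) - \log g'(z_k(y))\bigr| \ll \frac{|\log r_k|}{u_k^{\beta+1}}.
\]
The ratio $r_k$ satisfies the contractive recursion $\log r_{k+1}\approx (1 - c\beta\, u_k^{-\beta-1})\log r_k$, obtained by expanding $g(z)/z\approx 1 - c/u_z^\beta$. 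Iterating and evaluating $\exp\bigl(-\sum_{j}c\beta u_j^{-\beta-1}\bigr)\asymp u_k^{-\beta}$ via the substitution $dn = u^\beta du/c$ provided by \eqref{control-un}, one obtains $|\log r_k|\ll|\log r_0|\cdot u_k^{-\beta}$. Combined with $|\log r_0|=|\log(x/y)|\ll|x-y|$ for $x,y\in(1/2,1]$ and $u_k\asymp k^\gamma$, each summand is $\ll|x-y|/u_k^{2\beta+1}$. Since $(2\beta+1)\gamma=(2\beta+1)/(\beta+1)>1$, the sum converges and gives \eqref{dist}.

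The main obstacle is the distortion step. A naive mean value argument on $z\mapsto\log g'(z)$, using only the crude contraction $|g'|\leq 1$ to bound $|z_k(x)-z_k(y)|\leq |x-y|$, produces the sum $\sum_k u_k^{-(\beta+1)}\sim\sum 1/k$, which diverges logarithmically and yields at best a $\log n$ growth, not the claimed Lipschitz bound. The genuine gain comes from working with the ratio $r_k$ directly: its cumulative contraction $\exp\bigl(-\int c\beta\, u^{-\beta-1}\,dn\bigr)=u_k^{-\beta}$ supplies precisely the missing factor. Making the linearization $\log r_{k+1}\approx(1-c\beta u_k^{-\beta-1})\log r_k$ rigorous requires a short bootstrap, first verifying that $r_k$ remains in a uniformly bounded interval (so that Taylor expansion of $\log$ is valid) and then refining to the quantitative $u_k^{-\beta}$ decay.
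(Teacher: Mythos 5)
Your proof is correct in substance, and for the distortion estimate~\eqref{dist} it follows a genuinely different route from the paper. The paper shows that $\sup_{n}\sup_{x\in(1/2,1]}|z_n''(x)/z_n'(x)|<\infty$ via an explicit recursion for $z_n'$, $z_n''$ and a product estimate giving $|z_n'/z_n|\lesssim n^{-\eta}$, then concludes by the mean value theorem. You instead expand $\log z_n'(x)-\log z_n'(y)$ as the telescoping sum $\sum_{k<n}\bigl(\log g'(z_k(x))-\log g'(z_k(y))\bigr)$, control the logarithmic spread $\log r_k$ via a cumulative contraction, and sum directly. Both proofs exploit the same underlying mechanism: the product $\prod_k (1-c\beta u_k^{-\beta-1})\asymp u_n^{-\beta}$ supplies the polynomial gain that turns the harmonically divergent na\"{\i}ve estimate into a convergent sum. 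Your version works ``in finite-difference form'' and makes the rate $u_k^{-\beta}\asymp k^{-\beta\gamma}$ explicit, whereas the paper's works ``in differential form'' with the slightly less explicit exponent $\eta$; for the first part~\eqref{control-un} both arguments are essentially the same Stolz--Ces\`aro style comparison giving $u_n^{\beta+1}\asymp n$.

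Two small gaps you should close. First, the claimed inequality $(2\beta+1)\gamma=(2\beta+1)/(\beta+1)>1$ fails at $\gamma=1$, where $\beta=0$ and it equals $1$. That case is degenerate: for $\beta=0$ and $c=(\log 2)^0=1$ the map is $f(x)=2x$ on $(0,1/2]$, $\log g'$ is constant, and every summand vanishes; a one-line remark covers it. Second, the bootstrap you postpone is essential and deserves a sentence: since $f'(g(z))=1+c u^{-\beta}+c\beta u^{-\beta-1}$ with $u=-\log g(z)$, one has $z g'(z)/g(z)=(1+cu^{-\beta})/(1+cu^{-\beta}+c\beta u^{-\beta-1})\le 1$, i.e.\ $\log g$ is a weak contraction of the logarithmic metric on $(0,1/2]$. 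This gives $|\log r_k|\le|\log r_0|\ll|x-y|$ for all $k$ without any circularity, after which the Taylor expansion and the refinement to the $u_k^{-\beta}$ decay go through as you indicate.
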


\begin{proof}
  We have 
  \begin{equation}
    \label{recurrence}
    u_n=u_{n+1} - \log\bigl(1+c/u_{n+1}^\beta\bigr)
    \, .
  \end{equation}
  Observe that $(x_n)$ is decreasing to 0 and $(u_n)$ is increasing to $\infty$. Hence,
  \begin{align*}
    n+1 = \sum_{k=0}^{n} \frac{u_{k+1}-u_k}{\log(1+c u_{k+1}^{-\beta})}  
    & \ge \sum_{k=0}^{n}
      \int_{u_k}^{u_{k+1}}\frac{dx}{\log (1+c x^{-\beta})}
    \\
    & =\int_{u_0}^{u_{n+1}}\frac{dx}{\log (1+c x^{-\beta})}
    \ge K u_{n+1}^{\beta+1}
    \, ,
  \end{align*}
  for some $K>0$ not depending on $n$.
  Recall that $\gamma = (1 + \beta)^{-1}$.
  Thus $u_n \le  K^{-1} n^\gamma$.  

  By~\eqref{recurrence}, $u_{n+1}/u_n\to 1$, hence for some $c', K' > 0$,
  \begin{align*}
    n+1 \leq \sum_{k=0}^{n} \frac{u_{k+1}-u_k}{\log(1+c' u_{k}^{-\beta})}  
    & \leq \sum_{k=0}^{n}
      \int_{u_k}^{u_{k+1}}\frac{dx}{\log (1+c' x^{-\beta})}
    \\
    & =\int_{u_0}^{u_{n+1}}\frac{dx}{\log (1+c' x^{-\beta})}
    \leq K' u_{n+1}^{\beta+1}
    \, .
  \end{align*}
  Thus $u_n \ge  K'^{-1} n^\gamma$.
  This completes the proof of~\eqref{control-un}. 

  It remains to prove~\eqref{dist}. It suffices to show that
  \[
    \sup_{n\ge 1} \sup_{x\in (1/2,1]} 
    \Bigl|\frac{z_n''(x)}{z_n'(x)}\Bigr|
    <\infty
    \, .
  \]
  Let $n\ge 1$. We have
  \begin{align*}
    z_n & = z_{n+1}\biggl(1+\frac{c}{|\log z_{n+1}|^\beta}\biggr)
    \, \\
    z_n' & = z_{n+1}'\biggl(1+\frac{c}{|\log z_{n+1}|^\beta}
      + \frac{c\beta}{|\log z_{n+1}|^{\beta+1}}\biggr)
    \, \\
    z''_n & = z''_{n+1}
      \biggl(
        1+\frac{c}{|\log z_{n+1}|^\beta}
        + \frac{c\beta}{|\log z_{n+1}|^{\beta+1}}
      \biggr)
      + \frac{(z'_{n+1})^2}{z_{n+1}}
      \biggl(
        \frac{c\beta}{|\log z_{n+1}|^{\beta+1}}
        + \frac{c\beta(\beta+1)}{|\log z_{n+1}|^{\beta+2}}
      \biggr)
    \, .
  \end{align*}
  By the above computations and~\eqref{control-un}, we have 
  \[
    \Bigl| \frac{z'_{n}}{z_{n}} \Bigr|
    = \Bigl| \frac{z'_{n+1}}{z_{n+1}} \Bigr|
      \biggl(1+\frac{c\beta}{
        |\log z_{n+1}|^{\beta+1}(1+c|\log z_{n+1}|^{-\beta})
      }\biggr)
    \geq \Bigl| \frac{z'_{n+1}}{z_{n+1}}\Bigr|
        \Bigl(1+\frac\varepsilon {n+1}\Bigr)
  \]
  for some $\varepsilon>0$. 
  Hence
  \begin{equation}\label{second-est}
    \Bigl| \frac{z'_n}{z_n}\Bigr| 
    \leq \Bigl|\frac{z'_0}{z_0}\Bigr| \prod_{k=1}^n \Bigl(1 + \frac{\eps}{k}\Bigr)^{-1}
    \le \frac{L}{n^\eta}
  \end{equation}
  for some $L>0$ and $\eta>0$.
  
  Next, there is $K>0$ so that
  \begin{equation}\label{first-est}
    \Bigl|\frac{z_{n+1}''}{z_{n+1}'}\Bigr|
    \le \Bigl|\frac{z_{n}''}{z_{n}'}\Bigr|
     + \frac{K}{n+1} \Bigl| \frac{z'_{n+1}}{z_{n+1}}\Bigr|\, .
  \end{equation}
  Combining \eqref{second-est} and \eqref{first-est}, we obtain~\eqref{dist}.
\end{proof}

Observe that if $x \in Y$ with $\tau(x) = n$, then $F'(x) = 2 / z_{n-1}'(F(x))$.
With~\eqref{dist}, this proves Theorem~\ref{thm:ex}~\ref{thm:ex:dist}.
Further, $m(\tau \geq n) = z_n(1)/2 = \mathrm{e}^{-u_n(1)}$.
Given~\eqref{control-un}, this implies Theorem~\ref{thm:ex}~\ref{thm:ex:tails}.

The proof of Theorem~\ref{thm:ex} is complete. $\square$

\section{Proof of Proposition \ref{prop:boundofthedelta1first}}
\label{sec:prop:boundofthedelta1first}

Recall that to simplify the exposition and notation, we assume that $p = \gcd \{h(w) \colon w \in \cA\}=1$, so that the underlying Markov chain $(g_k)_{k \geq 0}$ is aperiodic. 

 Note first that, as quoted in \cite{CDKM18}, $\psi$ has the following property: For \(a,b \in \Omega\), with $a=  (g_0, \ldots, g_N, g_{N+1}, \ldots)$, $
  b  = (g_0, \ldots, g_N, g'_{N+1}, \ldots) $ 
with \(g_{N+1} \neq g'_{N+1}\),
\[
|\psi (a) - \psi (b)| \leq C \theta^{\sum_{k=0}^N {\bf 1}_{\{g_k \in S_0\}}} \, , 
\]
where $C$ is a constant depending on $\lambda$ (the constant appearing in \eqref{distanceseparation}), the diameter of $X$ and on the H\"older norm  of  $\varphi$,  and $\theta \in ]0,1[$ depends on $\lambda$ and on the H\"older exponent of $\varphi$.  
Therefore  \(
    \delta_\ell \leq C \theta^{s_\ell}
  \),
  where $s_\ell = \#\{k \leq \ell \colon g_k  \in S_0\}$. Let now $S_c = \{ ( w, h(w) -1) , w \in  \cA \}$.  Note that $S_c$ is an atom for the Markov chain $(g_n, n \geq 0)$ in the sense that if the chain enters in $S_c$ then it regenerates. Moreover, 
  \[
   \begin{aligned}
   s_\ell = \sum_{i=0}^{\ell}  {\bf 1}_{\{g_i \in S_0 \}}   & = \begin{cases}
     \sum_{i=0}^{\ell -1}  {\bf 1}_{\{g_i \in S_c \}}   & \text{ if  }g_0 \notin S_0\\
   1+  \sum_{i=0}^{\ell -1}  {\bf 1}_{\{g_i \in S_c \}}  &  \text{ if  }g_0 \in S_0 \, .
  \end{cases}
  \end{aligned}
   \]
 Hence,  note that
 \[
 C^{-1}\bE(\delta_\ell)
       \leq
      \bE ( \theta^{s_\ell}) \leq  \bE ( \theta^{\sum_{i=0}^{\ell -1}  {\bf 1}_{\{g_i \in S_c \}} }) = \bE ( \theta^{\sum_{i=1}^{\ell }  {\bf 1}_{\{g_i \in S_c \}} }) \, . \]
Let $R_0 = \inf \{ n > 0 \, : \, g_n \in S_c \}$ be the first renewal time and for $i \geq 1$,
  \[
  R_i = \inf \{ n > R_{i-1} \, : \, g_n \in S_c \}  \text{ and } \tau_i = R_i - R_{i-1} \, .
  \]
  Note that $(\tau_i)_{i \geq 1}$ forms a sequence of iid random variables and their common law is the law of $R_0$ when the chain starts from $S_c$. Let $\kappa = 1/(4 \E ( \tau_1))$. We have
    \begin{equation}
    \label{dec1step1}
      \bE ( \theta^{\sum_{i=1}^{\ell }  {\bf 1}_{\{g_i \in S_c \}} }) 
      \leq \theta^{  \kappa \ell }
            + \bP \Bigl( \sum_{i=1}^{\ell }  {\bf 1}_{\{g_i \in S_c \}} <  \kappa \ell \Bigr)
      \, .
  \end{equation}
  Next,
  \[
    \bP \Bigl( \sum_{i=1}^{\ell }  {\bf 1}_{\{g_i \in S_c \}} <  \kappa \ell \Bigr) \leq  \bP \Bigl( R_{ [\kappa \ell ] +1} > \ell \Bigr) \, .
  \]
Note that $R_{ [\kappa \ell ] +1} =  R_0 + \sum_{i=1}^{ [\kappa \ell ] +1} \tau_i $.  Hence
   \[
   \bP \Bigl( R_{ [\kappa \ell ] +1} > \ell \Bigr) \leq   \bP \Bigl( R_{0} > \ell /2 \Bigr)  +  \bP \Bigl( \sum_{i=1}^{ [\kappa \ell ] +1} \tau_i> \ell /2 \Bigr)   \, .
  \]
But 
\[
  \bP \Bigl( \sum_{i=1}^{ [\kappa \ell ] +1} \tau_i> \ell /2 \Bigr)  \leq    \bP \Bigl( \sum_{i=1}^{ [\kappa \ell ] +1} ( \tau_i - \E ( \tau_i)) > \ell /4 - \E ( \tau_1)  \Bigr) 
    \leq    \bP \Bigl( \sum_{i=1}^{ [\kappa \ell ] +1} ( \tau_i - \E ( \tau_i) )  > \ell /8   \Bigr)  \, , 
\]
if $\ell \geq 8 \E ( \tau_1 )$.  Now, using  that $\bP ( \tau_1 \geq k) = \bP_{\mathcal A} ( h \geq k) $ and that if $g_0 = ( w , \ell)$, then $R_0 = h(w) - \ell-1$, and recalling that since we assume a subexponential moment of order $\gamma$ for the return time, $h$  has also a subexponential moment of the same order, we  infer that there exists a positive constant $c$ such that 
\[
\bE ( {\rm e}^{t \tau_1^{\gamma}} )  < \infty \text{ and } \bE_{\nu} ( {\rm e}^{t R_0^{\gamma}}) < \infty \text{ for any $|t| \leq c $} \, .
\]
According to  Bernstein's $\psi_1$ inequality (see for instance \cite[Lemma 2.2.11]{VW96} and the subsequent
remark) when $\gamma =1$ or to the proof of Corollary 5.1 in Borovkov  \cite{Bo00} when $\gamma <1$ (see also inequality (1.4) in \cite{MPR}), there exists a positive constant $c_1$ such that 
\[
 \bP \Bigl( \sum_{i=1}^{ [\kappa \ell ] +1} ( \tau_i - \E ( \tau_i) )  > \ell /8   \Bigr)  \leq \exp ( -c_1 \ell^{\gamma} ) \, .
\]
So, overall, there exists a positive constant $c_2$ such that 
 \begin{equation}
    \label{dec1step2}
    \bP \Bigl( R_{ [\kappa \ell ] +1} > \ell \Bigr) \leq  \exp ( -c_2 \ell^{\gamma} ) 
      \, .
  \end{equation}
The proposition follows by taking into account \eqref{dec1step1} and \eqref{dec1step2}.  $\square$

\subsection*{Acknowledgements}
C.C.\ acknowledges the hospitality of Warwick University and Universit\'e Paris-Est (Marne-la-Vall\'ee).
A.K.\ was supported in part by a European Advanced Grant {\em StochExtHomog} (ERC AdG 320977)
at the University of Warwick and an Engineering and Physical Sciences Research Council grant
EP/P034489/1 at the University of Exeter, and is grateful to Mark Holland and Ian Melbourne
for support during this work.

\end{document}